\newcommand{\id}{\operatorname{id}}
\newcommand{\supp}{\operatorname{supp}}
\newcommand{\cB}{{\mathcal B}}
\newcommand{\cF}{{\mathcal F}}
\newcommand{\cG}{{\mathcal G}}
\newcommand{\cO}{{\mathcal O}}
\newcommand{\cK}{{\mathcal K}}
\newcommand{\cL}{{\mathcal L}}
\newcommand{\cM}{{\mathcal M}}
\newcommand{\cN}{{\mathcal N}}
\newcommand{\cS}{{\mathcal S}}
\newcommand{\cU}{{\mathcal U}}
\newcommand{\cX}{{\mathcal X}}
\newcommand{\LL}{{\mathscr L}}
\newcommand{\BB}{{\mathscr B}}
\newcommand{\R}{{\mathbbm R}}
\newcommand{\N}{{\mathbbm N}}
\newcommand{\Z}{{\mathbbm Z}}
\newcommand{\Seq}{S^{eq}}
\newcommand{\inn}{\operatorname{int}}
\newcommand{\card}{\operatorname{card}}
\newcommand{\hx}{{\hat{x}}}
\newcommand{\hX}{{\hat{X}}}
\newcommand{\Xnull}{X_0}
\newcommand{\cOsq}{\cO^{\,2}}
\newcommand{\cOnull}{\cO_0}
\newcommand{\cOnullsq}{\cO_0^{\,2}}
\newcommand{\Deltanull}{\Delta_0}
\newcommand{\Qnull}{Q_0}
\newcommand{\nuW}{\nu_{\scriptscriptstyle W}}
\newcommand{\nuWprime}{\nu_{\scriptscriptstyle W'}}
\newcommand{\nuG}{\nu^{\scriptscriptstyle G}}
\newcommand{\nuWG}{\nu_{\scriptscriptstyle W}^{\scriptscriptstyle G}}
\newcommand{\MW}{\cM_{\scriptscriptstyle W}}
\newcommand{\MG}{\cM^{\scriptscriptstyle G}}
\newcommand{\MWG}{\cM^{\scriptscriptstyle G}_{\scriptscriptstyle W}}
\newcommand{\GMWG}{\cG\cM^{\scriptscriptstyle G}_{\scriptscriptstyle W}}
\newcommand{\piG}{\pi^{\scriptscriptstyle G}}
\newcommand{\tpiG}{\tilde\pi^{\scriptscriptstyle G}}
\newcommand{\piH}{\pi^{\scriptscriptstyle H}}
\newcommand{\piGH}{\pi^{\scriptscriptstyle G\times H}}
\newcommand{\pihX}{\pi^{\scriptscriptstyle \hX}}
\newcommand{\pihXG}{\pi^{\scriptscriptstyle \hX\times G}}
\newcommand{\hpiG}{\hat\pi^{\scriptscriptstyle G}}
\newcommand{\0}{\underline{0}}
\newcommand{\QM}{Q_{\cM}}
\newcommand{\QMG}{Q_{\cM^G}}
\newcommand{\oneone}{1-1\xspace}
\newcommand{\dL}{\mathrm{dens}(\LL)}
\newcommand{\cMG}{\cM^{\scriptscriptstyle G}}
\newcommand{\XG}{X^{\scriptscriptstyle G}}
\newcommand{\SH}{\cS_{\scriptscriptstyle H}}
\newcommand{\ZG}{Z^{\scriptscriptstyle G}}
\newcommand{\piZ}{\pi^{\scriptscriptstyle Z}}
\newcommand{\piZG}{\pi^{\scriptscriptstyle \ZG}}
\newcommand{\gen}{\overset{\scriptscriptstyle gen}{\to}}
\newcommand{\KW}{\cK_{\scriptscriptstyle W}}
\newcommand{\ddelta}{\boldsymbol{\delta}}
\newtheorem {definition}{Definition}[section] 
\newtheorem {lemma}{Lemma}[section]
\newtheorem{theorem}{Theorem}[section]
\newtheorem {bemerkung}{Remark}[section]
\newtheorem{proposition}{Proposition}[section]
\newtheorem {corollary}{Corollary}[section]
\newtheorem{beispiel}{Example}[section]
\newenvironment{remark} {\begin{bemerkung} \normalfont }{\end{bemerkung}}
\newenvironment{example} {\begin{beispiel} \normalfont }{\end{beispiel}}
\begin{document}

\title{Maximal equicontinuous generic factors, weak model sets, and centralizers of some $\cB$-free subshifts
\thanks{This publication combines a modification of the preprint arxiv:1610.03998 with some new material in Section~\ref{sec:B-free}.}}
\author{Gerhard Keller
\thanks{I am indebted to A.~Dymek and M.~Lema\'n{}czyk for helpful comments on this manuscript.}}
\affil{Department Mathematik, Universit\"at Erlangen-N\"urnberg}
\date{\today}


\maketitle

\begin{abstract}
The orbit closures of regular model sets generated from a cut-and-project scheme given by a co-compact lattice $\cL\subset G\times H$ and  compact  and aperiodic window $W\subseteq H$, have the maximal equicontinuous factor (MEF) $(G\times H)/\cL$, if the window is toplogically regular. This picture breaks down completely, when the window has empty interior, in which case the MEF is always trivial, although $(G\times H)/\cL$ continues to be the Kronecker factor for the Mirsky measure. As this situation occurs for many interesting examples like the square-free numbers or the visible lattice points, there is some need for a slightly weaker concept of topological factors that is still strong enough to capture basic properties of the system. Here we propose to use the concept of a generic factor \cite{HuangYe2012} for this purpose. For so called ergodic topological dynamical systems we prove the existence of a maximal equicontinuous generic factor (MEGF) and characterize it in terms of the regional proximal relation. For such systems we also show that the MEGF is trivial if and only if the system is topologically weakly mixing. This part of the paper profits strongly from previous work by McMahon \cite{McMahon1978} and Auslander \cite{Auslander1988}.  
In Section~3 we show that $(G\times H)/\cL$ is indeed the MEGF of the orbit closure of each weak model set with an aperiodic Haar regular window, and in Section~\ref{sec:B-free} we apply this fact to give an alternative proof of the finding \cite{Mentzen2017} that the centralizer of any $\cB$-free dynamical system of Erd\"os type is trivial.
\end{abstract}

\renewcommand{\thefootnote}{\fnsymbol{footnote}} 
\footnotetext{\emph{Key words:} maximal equicontinuous generic factor, cut-and-project scheme, (weak) model set, Mirsky measure, $\BB$-free systems, trivial centralizer}
\footnotetext{\emph{MSC 2010 classification:} 37A30, 37B05}     
\renewcommand{\thefootnote}{\arabic{footnote}}


\section{Introduction}

Let $G$ and $H$ be locally compact second countable groups. In many examples, $G=\Z^d$ or $\R^d$, whereas $H$ will often be a more general group. Each pair $(\LL,W)$, consisting of a cocompact lattice $\LL\subset G\times H$ and a  compact subset $W$ of $H$, also called the window, defines a weak model set $\Lambda(\LL,W)$ as the set of all points $x_G\in G$, for which there exists a point $x_H\in W$ such that $(x_G,x_H)\in\LL$. There is an abundant literature on model sets,
see e.g. the collection of references cited in \cite{BaakeGrimm13}.
Many of these sets exemplify \emph{aperiodic order}, a concept which, so far, is mostly defined by a wealth of examples \cite{Baake2002,Baake2016}.
The following seems to be a common feature of all of them:
No $g\in G\setminus\{0\}$ satisfies $g+\Lambda(\LL,W)=\Lambda(\LL,W)$, but
the orbit closure $\overline{\{g+\Lambda(\LL,W):g\in G\}}$ as a $G$-dynamical system has a nontrivial \emph{maximal equicontinuous factor (MEF)} and/or a nontrivial \emph{Kronecker factor (KF)} capturing the quasiperiodic aspects of the dynamics.

Many of the simpler examples are uniquely ergodic, so that one can talk unambigously about their KF, and quite often this KF is just the MEF equipped with its Haar measure. But, more recently, dynamically richer examples, like  the set of square-free numbers \cite{CS13,Peckner2012}, the set of visible lattice points \cite{BaakeHuck14} and their generalizations \cite{BKKL2015}, have attracted much attention. They share the common feature that the orbit closure
$\overline{\{g+\Lambda(\LL,W):g\in G\}}$ has a fixed point, so that the MEF must be trivial, whereas there are plenty of invariant measures that have non-trivial KFs. Equipped with a very natural invariant measure (called Mirsky measure in some cases) these systems are actually isomorphic to their KFs. The approach to the dynamics of weak model sets from \cite{KR2015} encompasses all these examples. It suggests that the notion of \emph{maximal equicontinuous generic factor (MEGF)} might be an appropriate unifying concept. Equicontinuous generic factors for abstract topological dynamical systems were studied in \cite{HuangYe2012}, see also \cite{Downarowicz1998} for generic eigenfunctions.

In the next section we review some facts on equicontinuous generic factors and prove the existence of a unique MEGF for \emph{ergodic topological dynamical systems}, i.e. for continuous actions of infinite (abstract) groups on compact metric spaces, which admit an ergodic invariant probability measure with full topological support (Theorem~\ref{theo:max-factor-prelim}). The proof is inspired by work of McMahon \cite{McMahon1978} and Auslander \cite{Auslander1988}. If the acting group of such a system is abelian, we also show that the MEGF is trivial if and only if the system is topologically weakly mixing.

In Section~\ref{sec:MEF-for-WMS} we test the potential usefulness of this construction
on the dynamics on weak model sets, where there are good reasons to expect that this construction leads to the canonical $G$-action on the group $(G\times H)/\LL$, see e.g. \cite{KR2015,BKKL2015}. We will see that $\left((G\times H)/ \LL,G\right)$ is indeed both, the MEGF and the KF of the orbit closure 
of any weak model set that is generic for its Mirsky measure (Theorem~\ref{theo:cut-project-MEGF}). 
It should be noted, however, that in general ergodic topological dynamical systems the MEGF and the KF need not coincide (Example~\ref{example:B-free-KF-MEGF}).

In Section~\ref{sec:B-free} we restrict to the dynamics of very particular weak model sets, namely to $\cB$-free dynamics, see e.g. \cite{Kulaga-Przymus2014,BKKL2015,KKL2017}. If the set $\cB\subseteq\N$ defining such a system is taut and contains an infinite pairwise co-prime subset (this is the case e.g. for the square free numbers), then centralizer of the system is trivial (Theorem~\ref{theo:trivial-centralizer}).

\section{Equicontinuous generic factors}\label{sec:EGF}

Let $(X,G)$ be a topological dynamical system with an abstract group $G$ acting by homeomorphisms on a compact metrizable space  $X$. For this action we adopt the short cut notation $x\mapsto gx$. By $Gx$ we denote $\{gx:g\in G\}$.

\begin{definition}\label{def:ergodic}\cite{Glasner1993,Glasner2003}
The system $(X,G)$ is \emph{ergodic}, if 
there exists an ergodic $G$-invariant Borel probability measure $\lambda$ on $X$ with $\supp(\lambda)=X$.
\end{definition}

\begin{remark}\label{remark:first}
\begin{compactenum}[a)]
\item Observe that the following properties are equivalent:
\begin{compactenum}[(i)]
\item $(X,G)$ is ergodic.
\item There exists a $G$-invariant Borel probability measure $\lambda$ on $X$ with $\supp(\lambda)=X$ and $\lambda(X_t)=1$.
\end{compactenum}
The implication (i) $\Rightarrow$ (ii) follows immediately from 
\cite[Lemma 2.4]{Keynes1969}. For the reverse implication note that
\cite[Lemma 2.4]{Keynes1969} guarantees that $\lambda$ is ``closed ergodic'' so that
the ergodicity of $(X,G)$ follows from
\cite[Proposition 2.6]{Keynes1969}.
\item In particular, each ergodic $(X,G)$ is topologically transitive.
\item
The mere existence of a (non-ergodic) $G$-invariant probability measure with full support (i.e. being an $E$-system in the terminology of \cite{Glasner1993,Glasner2003}) is a weaker property than ergodicty of $(X,G)$
\cite{Weiss1971}.
\end{compactenum}
\end{remark}

From now on we assume that 
$(X,G)$ is \emph{topologically transitive}. 
(Ergodicity will be required only when it is needed.)
Then the set
\begin{displaymath}
X_t:=\{x\in X:\overline{Gx}=X\}
\end{displaymath}
of transitive points is $G$-invariant and residual, see e.g. \cite[Proposition 1]{Cairns2007}. 
\begin{remark}\label{remark:second-countable}
As a topological space (endowed with the induced topology from the compact metric space $X$), $X_t$ is second countable and in particular separable \cite[Theorem 30.3]{Munkres-book}. The same is true for each 
$G$-invariant subset $\Xnull\subseteq X_t$ with the induced topology.
\end{remark}

For topological dynamical systems with $G=\Z$ (even with $G=\N$), Huang and Ye \cite{HuangYe2012} introduced the notion of an equicontinuous generic factor, that we adapt here to general $G$:

\begin{definition}\label{def:MEGF}
\begin{compactenum}[a)]
\item The system $(Y,G)$ is an \emph{equicontinuous generic factor} of $(X,G)$, if 
$(Y,G)$ is an  equicontinuous, transitive (and hence minimal) system, and if
there is a continuous map $\pi:X_t\to Y$ equivariant under the action of $G$. (As $(Y,G)$ is minimal, the image $\pi(X_t)$ is dense in $Y$.) We also write $\pi:(X,G)\gen(Y,G)$.
\item An \emph{equicontinuous generic factor} $\pi:(X,G)\gen(Z,G)$ is \emph{maximal}, if the following holds:\\
If $\pi_Y:(X,G)\gen(Y,G)$ is another equicontinuous generic factor, then there is a factor map $\pi':(Z,G)\to (Y,G)$ such that $\pi_Y=\pi'\circ\pi$.
\end{compactenum}
\end{definition}

The following theorem is the main result of this paper:

\begin{theorem}\label{theo:max-factor-prelim}
An ergodic system $(X,G)$ has a unique maximal equicontinuous generic factor (MEGF). 
\end{theorem}
\noindent
A generalization of this theorem, which also provides more details of the construction of the factor map, is stated and proved below in Subsection~\ref{subsec:existence-MEGF}.

\begin{remark}
An equicontinuous generic factor map $\pi$ is in particular a continuous map from a dense subset $X_t$ of $X$ to $Y$. If $X$ and $Y$ are both compact, such a map can always be extended to a measurable map from $X$ to $Y$, continuous at each point of $X_t$. Indeed,  
denote by $\overline \Pi:=\overline{\{(x,\pi(x)): x\in X_t\}}$ the closure of the graph of $\pi$ in $X\times Y$. The multivalued map $\phi:x\mapsto \overline \Pi_x$ that associates to each point $x\in X$  the (compact) $x$-section of $\overline \Pi$ is upper semi-continuous and hence measurable \cite[Corollary III.3]{CV1977} so that there is a measurable selector $\tilde{\pi}:X\to Y$ such that $\tilde{\pi}(x)\in \overline \Pi_x$ for each $x\in X$ \cite[Theorem III.6]{CV1977}. As $\overline \Pi_x=\{\pi(x)\}$ at all $x\in X_t$ ($\pi$ is continuous at all these points!), $\tilde{\pi}$ extends $\pi$, and as the graph of $\tilde{\pi}$ is contained in $\overline \Pi$, $\tilde{\pi}$ is continuous at all $x\in X_t$. 

Note, however, that in general $\tilde{\pi}$ cannot be chosen to be equivariant under the actions of $G$, because fixed points are always mapped to fixed points under equivariant maps.
\end{remark}

For the case $G=\Z$ (and also $G=\N$) and without assuming that the system $(X,G)$ is ergodic, Huang and Ye \cite[Theorem 3.8]{HuangYe2012} proved that a system $(X,G)$ has a trivial MEGF if and only if it is weakly scattering, and that if $(X,G)$ is an $E$-system, then this happens if and only if the system is weakly mixing. Here we prove,
for general acting groups $G$ and ergodic $(X,G)$, that such a system is weakly mixing if and only if it has a trivial MEGF.

\begin{theorem}\label {theo:weak-mixing}
Assume that
$(X,G)$ is topologically transitive.
\begin{compactenum}[a)]
\item
If $(X,G)$ is weakly scattering, i.e. if $(X\times Z,G)$ (the product action) is topologically transitive for each minimal equicontinuous $(Z,G)$, then the maximal equicontinuous generic factor of $(X,G)$ is trivial (i.e. a singleton).
\item If $(X,G)$ is ergodic and has a trivial maximal equicontinuous generic factor, then $(X,G)$ is topologically weakly mixing.
\end{compactenum}
\end{theorem}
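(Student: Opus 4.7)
For part (a), I plan to argue by contradiction: assume there is a non-trivial equicontinuous generic factor $\pi:(X,G)\gen(Y,G)$ with $|Y|\ge 2$ and show that $(X\times Y,G)$ has no dense orbit, contradicting weak scattering. Since $(Y,G)$ is equicontinuous and minimal, $Y$ carries a $G$-invariant compatible metric $d_Y$ (average any compatible metric over the compact Ellis closure of $G$ in $\mathrm{Homeo}(Y)$). If $(x,y)\in X\times Y$ had dense orbit, then $x$ would be a transitive point and $(x,\pi(x))$ would lie in the orbit closure; picking $g_n\in G$ with $g_n x\to x$ and $g_n y\to\pi(x)$, continuity of $\pi$ at $x\in X_t$ would force $g_n\pi(x)=\pi(g_n x)\to\pi(x)$, and the $G$-invariance of $d_Y$ would give
\[
d_Y(y,\pi(x))=\lim_n d_Y(g_n y,g_n\pi(x))=d_Y(\pi(x),\pi(x))=0,
\]
i.e.\ $y=\pi(x)$. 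But then the orbit would stay on the graph of $\pi|_{X_t}$, whose $x$-section is the single point $\{\pi(x)\}$ by continuity of $\pi$ at $x$, contradicting density in $\{x\}\times Y$.

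For part (b), I would prove the contrapositive: assuming $(X\times X,G)$ is not topologically transitive, build a non-trivial equicontinuous generic factor. The plan is to use the characterization of the MEGF from Theorem~\ref{theo:max-factor-prelim} --- following McMahon and Auslander, the MEGF is the quotient $X_t/Q$ by a closed $G$-invariant equivalence relation $Q$ on $X_t$ built from a regional proximal-type relation, so that triviality of the MEGF is equivalent to $Q=X_t\times X_t$. Failure of topological weak mixing provides non-empty open sets $U_1,V_1,U_2,V_2\subset X$ with $N(U_1,V_1)\cap N(U_2,V_2)=\emptyset$, where $N(U,V):=\{g\in G:gU\cap V\neq\emptyset\}$. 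Choosing $(x,y)\in(U_1\cap X_t)\times(U_2\cap X_t)$, the task is to show $(x,y)\notin Q$, yielding the desired non-trivial MEGF.

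The principal obstacle is precisely this last step: translating the absence of joint recurrence into the failure of regional proximality for a concrete pair. In the minimal case (Auslander) the bridge is supplied by the Ellis semigroup and the equivalence of the regional proximal and the proximal relations, but minimality is unavailable here. I would instead replace it by the combination of residual density of $X_t$ and the full-support ergodic invariant measure $\lambda$: a regional-proximal witness for $(x,y)$ would give perturbations $x'\in U_1\cap X_t,\ y'\in U_2\cap X_t$ and $g\in G$ with $d(gx',gy')<\varepsilon$; since $gx',gy'\in X_t$ both have dense orbit and their $\lambda$-typical time averages are controlled, the aim is to produce $h\in G$ sending both points $\varepsilon$-close to $V_1\times V_2$, contradicting $N(U_1,V_1)\cap N(U_2,V_2)=\emptyset$. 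Making this precise --- plausibly via the ergodic decomposition of $\lambda\times\lambda$ together with equidistribution of generic orbits in $X\times X$ --- is where the hypothesis of ergodicity (beyond mere topological transitivity) is essential, and is the core technical difficulty of the argument.
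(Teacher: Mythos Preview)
Your argument for part (a) is correct and close in spirit to the paper's, though the paper phrases it more compactly via the group structure on $Z$: it defines $\phi(x,z)=z^{-1}\pi(x)$ on $X_t\times Z$, observes $\phi$ is $G$-invariant and continuous, hence constant on the dense orbit $G(x_0,z_0)$ and so constant everywhere, forcing $\pi$ to be constant. Your invariant-metric version reaches the same conclusion; the ``graph'' phrasing at the end is a roundabout way of saying that once $y=\pi(x)$, every point $(x,y')$ in the orbit closure must again satisfy $y'=\pi(x)$, whence $Y$ is a singleton.

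Part (b), however, is not a proof but a plan with an acknowledged hole, and the hole is real. Your proposed bridge---from $(gx',gy')$ close to finding $h$ with $hgx'\in V_1$ and $hgy'\in V_2$---requires exactly what you do not have: joint approximation of an \emph{arbitrary} pair of nearby transitive points, which is essentially weak mixing itself. The vague appeal to the ergodic decomposition of $\lambda\times\lambda$ does not help, since $\lambda\times\lambda$ need not be ergodic and a generic pair $(x',y')$ need not be generic for any of its ergodic components with full support.

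The paper avoids this entirely by working not with the regional proximal relation $Q_t$ but with the measure-theoretic relations $K_t(N)=\{(x,x')\in X_t^2:\lambda(N_x\triangle N_{x'})=0\}$, where $N$ ranges over closed $G$-invariant subsets of $X^2$. Triviality of the MEGF means $\Seq_t=X_t^2$, and by the chain $\Seq_t\subseteq\bigcap_N K_t(N)$ (Lemma~\ref{lemma:2.5}) this forces $K_t(N)=X_t^2$ for \emph{every} such $N$, i.e.\ $\lambda(N_x\triangle N_{x'})=0$ for all $x,x'\in X_t$. Now one argues directly: if $N$ has non-empty interior, pick open $U\times V\subseteq N$ and $x_0\in U\cap X_t$; then $V\subseteq N_{x_0}$, and since $\lambda(N_x\setminus N_{x_0})=0$ for every $x\in X_t$, full support of $\lambda$ together with $V$ open and $N_x$ closed gives $V\subseteq N_x$ for all $x\in X_t$. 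Thus $X_t\times V\subseteq N$, and spreading $V$ by the $G$-action and closing yields $N=X^2$. The point you are missing is that the full-support ergodic measure enters not through equidistribution of orbits in $X\times X$, but through the pseudometrics $d_N(x,x')=\lambda(N_x\triangle N_{x'})$ that underlie the construction of the MEGF.
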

\noindent For the proof see Subsection~\ref{subsec:weak-mixing}.
As weak mixing implies weak scattering, we have the following corollary:
\begin{corollary}
Assume that
$(X,G)$ is an ergodic topologically transitive system. Then $(X,G)$ is
weakly scattering if and only if it is topologically weakly mixing if and only if its maximal equicontinuous generic factor is trivial.
\end{corollary}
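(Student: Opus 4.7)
The Corollary reduces to a cycle of three implications. Two of them are immediate consequences of Theorem~\ref{theo:weak-mixing}: part (a) gives \emph{weakly scattering} $\Rightarrow$ \emph{trivial MEGF} (a direction which does not even require the ergodicity hypothesis), and part (b) gives \emph{trivial MEGF} $\Rightarrow$ \emph{topologically weakly mixing} (which uses the ergodicity assumption of the Corollary). Hence the only point left to verify is \emph{topologically weakly mixing} $\Rightarrow$ \emph{weakly scattering}, which closes the loop.

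For this last implication, let $(Z,G)$ be an arbitrary minimal equicontinuous system; the goal is to show that $(X\times Z,G)$ is topologically transitive. Fix nonempty open sets $U_1,U_2\subseteq X$ and $V_1,V_2\subseteq Z$ and set
\[
N(A,B):=\{g\in G:gA\cap B\neq\emptyset\}.
\]
The plan is the classical syndetic--thick dichotomy: topological weak mixing of $(X,G)$ is by definition topological transitivity of $(X\times X,G)$, from which one routinely deduces that $N(U_1,U_2)$ is \emph{thick} in $G$. On the other hand, minimality combined with equicontinuity of $(Z,G)$ (which, restricted to the orbit closure in $Z$, is a compact group rotation) implies that $N(V_1,V_2)$ is \emph{syndetic} in $G$: cover $Z$ by finitely many translates of an equicontinuity ball contained in $V_2$ and use that some orbit of a point of $V_1$ enters each translate. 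Thick sets and syndetic sets in $G$ always intersect, producing an element $g\in N(U_1,U_2)\cap N(V_1,V_2)$, and this is exactly what is needed for topological transitivity of $(X\times Z,G)$.

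The only potential obstacle is that the thick/syndetic intersection property is cleanest for abelian (or at least amenable) groups; in the setting the Corollary is actually invoked in (cut-and-project schemes with $G=\Z^d$ or $\R^d$, or more generally locally compact second countable abelian), this is entirely routine, and an alternative is to appeal to the standard fact that topologically weakly mixing systems are disjoint from minimal distal systems. Either way, the three implications close into the desired equivalence and the Corollary follows.
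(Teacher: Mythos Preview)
Your reduction is exactly the paper's: the corollary is stated immediately after Theorem~\ref{theo:weak-mixing} with the single-sentence justification ``As weak mixing implies weak scattering, we have the following corollary'', so the only content beyond Theorem~\ref{theo:weak-mixing}(a),(b) is precisely the implication \emph{topologically weakly mixing} $\Rightarrow$ \emph{weakly scattering}, which the paper treats as known. You go further and actually sketch this implication via the thick/syndetic intersection argument; that is a valid route (the step ``weak mixing $\Rightarrow$ $N(U_1,U_2)$ thick'' hides the passage from $2$-fold to $n$-fold weak mixing, but you correctly flag that this is unproblematic for abelian $G$, which covers all the applications in the paper), and your alternative via disjointness from minimal distal systems is equally standard.
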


\subsection{The regional proximal and the equicontinuous structure relation}

\paragraph{The equicontinuous structure relation}
During all of this note, $\Xnull$ denotes a dense, $G$-invariant subset of $X_t$.
We endow $X_t$ and $\Xnull$ with the induced topology inherited from $X$, and we denote by
$\cO,\cO_t$ and $\cOnull$ these topologies on $X,X_t$ and $\Xnull$, respectively.
The corresponding product topologies on $X^2,X_t^2$ and $\Xnull^2$ are denoted by
$\cOsq,\cOsq_t$ and $\cOnullsq$, respectively. We recall the following elementary observation:

\begin{remark}\label{rem:elementary-1}
For $A\subseteq X$ and $B\subseteq X^2$ we have
\footnote{Here is the proof for $A\subseteq X$. The one for $B\subseteq X^2$ is nearly identical:
\begin{equation*}
\begin{split}
x\in \overline{A\cap\Xnull}^{\cOnull}
&\Leftrightarrow
x\in \Xnull\ \wedge\ \forall V\in \cO:\ x\in V\cap \Xnull\Rightarrow (A\cap\Xnull)\cap(V \cap\Xnull)\neq\emptyset\\
&\Leftrightarrow
x\in \Xnull\ \wedge\ \forall V\in \cO:\ x\in V\Rightarrow (A \cap\Xnull)\cap V\neq\emptyset\\
&\Leftrightarrow
x\in\Xnull \ \wedge\ x\in\overline{A\cap \Xnull}^\cO.
\end{split}
\end{equation*}
}
\begin{equation}\label{eq:new-1}
\overline{A\cap\Xnull}^{\cOnull}=\overline{A\cap \Xnull}^\cO\cap\Xnull
\quad\text{and}\quad
\overline{B\cap\Xnull^2}^{\cOnullsq}=\overline{B\cap \Xnull^2}^{\cOsq}\cap\Xnull^2\,.
\end{equation}
For $U\in\cO$ and $V\in\cOsq$ this implies
\begin{equation}\label{eq:new-2}
\overline{U\cap\Xnull}^{\cOnull}=\overline{U}^\cO\cap\Xnull
\quad\text{and}\quad
\overline{V\cap\Xnull^2}^{\cOnullsq}=\overline{V}^{\cOsq}\cap\Xnull^2\,.
\end{equation}
If $X_0=X_t$, eqn.~\eqref{eq:new-1} yields $\overline{A\cap X_t}^{\cO_t}=\overline{A\cap X_t}^\cO\cap X_t$,
and this identity applied to $A=X_0$ gives
\begin{equation*}
\overline{\Xnull}^{\cO_t}=\overline{\Xnull\cap X_t}^{\cO_t}
=\overline{X_0\cap X_t}^\cO\cap X_t
=\overline{\Xnull}^\cO\cap X_t.
\end{equation*}
\end{remark}

For a homomorphism (i.e. a continuous G-equivariant map) $\pi:\Xnull\to Y$, where $Y$ is any compact metric group on which $G$ acts minimally by translation (so that $\pi(\Xnull)$ is dense in $Y$ \footnote{Observe that, vice versa, denseness of $\pi(\Xnull)$ in $Y$ implies minimality of the action of $G$ on $Y$.}), denote
\begin{equation}\label{eq:Stpi-def}
S^\pi_{0}:=\{(x,x')\in \Xnull^2: \pi(x)=\pi(x')\}\ ,
\end{equation}
and let
\begin{equation}\label{eq:Steq-def}
\Seq_{0}:=\bigcap_{\pi:\Xnull\to Y}S^\pi_{0}\ .
\end{equation}
(The intersection is taken over all such spaces $Y$ and all homomorphisms $\pi:\Xnull\to Y$.)
We call $\Seq_{0}$ the equicontinuous structure relation on $\Xnull$. It is obviously an $\cOnull$-closed, $G$-invariant equivalence relation on $\Xnull$.
\begin{remark}
Recall that each topologically transitive, equicontinuous  $G$-action on a compact metric space $Y$ induces the structure of an abelian group on $Y$, such that $Y$ (with its original topology) becomes a topological group, on which $G$ acts by translation. \footnote{This fact goes back to work of Ellis. A comprehensive presentation of the corresponding circle of ideas can be found in \cite[Sec.~3.1]{ABKL14}.} In particular, using an equivalent metric on $Y$, if necessary, we can always assume that the action of $G$ on $Y$ is isometric. \footnote{Denote by $d$ the given metric on $Y$. Then $d'(y_1,y_2):=\sup\{d(gy_1,gy_2): g\in G\}$ has the desired properties.}
\end{remark}

\paragraph{The regional proximal relation}
Denote
\begin{displaymath}
\Deltanull:=\{(x,x): x\in \Xnull\}\ ,
\end{displaymath}
\begin{displaymath}
\cU_{0}:=\left\{U_{0}:=U\cap \Xnull^2: U\in\cOsq\text{ $G$-invariant, }\Deltanull\subset U\right\}\ ,
\end{displaymath}
let
\begin{displaymath}
\Qnull:=\bigcap_{U_{0}\in\cU_{0}}\overline{U_{0}}\cap \Xnull^2\ ,
\end{displaymath}
and denote by $S^\ast_{0}$ the smallest $\cOnullsq$-closed, $G$-invariant equivalence relation on $\Xnull^2$ containing $\Qnull$.
Because of \eqref{eq:new-2} in Remark~\ref{rem:elementary-1}, it does not matter how the topological hull operation on $U_{0}\subseteq\Xnull^2$ is interpreted.

\begin{lemma}\label{lemma:PR-characterization}
$\Qnull$ is the regional proximal relation of the non-compact dynamical system $(\Xnull,G)$, i.e.
\begin{equation}\label{eq:PR-characterization}
\begin{split}
\Qnull=&\left\{(x,y)\in \Xnull^2: \text{for all neighbourhoods $A\in\cO$ of $x$, $B\in\cO$ of $y$ and $V\in\cOsq$ of $\Deltanull$ there exist}\right.
\\
&\left.\text{\hspace*{3cm}$x'\in A\cap \Xnull$, $y'\in B\cap \Xnull$ and $g\in G$  s.t. $(gx',gy')\in V$}\right\}.
\end{split}
\end{equation}
\end{lemma}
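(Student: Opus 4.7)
The plan is to prove both inclusions between $\Qnull$ and the set $\cR$ described on the right-hand side of \eqref{eq:PR-characterization}. The decisive observation, used in both directions, is that the $G$-invariance requirement in the definition of $\cU_{0}$ lets me freely move between an arbitrary open neighborhood $V$ of $\Deltanull$ in $X^2$ (as quantified in $\cR$) and its $G$-invariant version (as required by $\cU_{0}$).

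For $\Qnull\subseteq\cR$, I would start with $(x,y)\in\Qnull$ and arbitrary neighborhoods $A\in\cO$ of $x$, $B\in\cO$ of $y$, together with a neighborhood $V\in\cOsq$ of $\Deltanull$. The natural $G$-invariant companion of $V$ is $U:=\bigcup_{g\in G}g^{-1}V$, which is open, $G$-invariant, and still contains $\Deltanull$ (since $\Deltanull$ itself is $G$-invariant). Hence $U_{0}:=U\cap\Xnull^2\in\cU_{0}$, so $(x,y)\in\overline{U_{0}}$. By Remark~\ref{rem:elementary-1}, this closure coincides with $\overline{U}^{\cOsq}\cap\Xnull^2$, and therefore the $\cOsq$-open set $A\times B$ meets $U_{0}$ at some point $(x',y')$. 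Unpacking the definition of $U$ yields $g\in G$ with $(x',y')\in g^{-1}V$, i.e.\ $(gx',gy')\in V$; this is the witness required by $\cR$.

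For $\cR\subseteq\Qnull$, I would fix $(x,y)\in\cR$ and an arbitrary $U_{0}=U\cap\Xnull^2\in\cU_{0}$, and aim to show $(x,y)\in\overline{U_{0}}$. It suffices to find a point of $U_{0}$ in each product $A\times B$ of open neighborhoods of $x$ and $y$. The key point is that $U$ itself is admissible as the set $V$ in the definition of $\cR$, because $U\in\cOsq$ already contains $\Deltanull$. Applying the defining property of $\cR$ with this choice produces $x'\in A\cap\Xnull$, $y'\in B\cap\Xnull$, and $g\in G$ with $(gx',gy')\in U$; the $G$-invariance of $U$ then immediately gives $(x',y')\in U\cap\Xnull^2=U_{0}$, as required.

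I do not anticipate any real obstacle: once the role of $G$-invariance is isolated, both inclusions reduce to essentially one line of set-theoretic bookkeeping. The only point requiring some care is the interplay between closures in the various topologies $\cOsq$, $\cOnullsq$, $\cO$ and $\cOnull$, and this is precisely what Remark~\ref{rem:elementary-1} is set up to handle.
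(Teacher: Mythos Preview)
Your proposal is correct and follows essentially the same argument as the paper: in both directions you pass between an arbitrary $\cOsq$-neighbourhood $V$ of $\Deltanull$ and its $G$-saturation $U=\bigcup_{g\in G}gV$, and then use that $(x,y)\in\overline{U_{0}}$ forces $(A\times B)\cap U_{0}\neq\emptyset$ for each product neighbourhood $A\times B$. The only cosmetic difference is that you make the appeal to Remark~\ref{rem:elementary-1} explicit, whereas the paper absorbs it into the remark preceding the lemma.
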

\begin{proof}
Suppose first that $(x,y)\in \Qnull\subseteq\Xnull^2$ and that $A,B$ and $V$ are neighbourhoods
as in (\ref{eq:PR-characterization}). Define $U:=\bigcup_{g\in G}gV$. Then $U\in\cOsq$ is a $G$-invariant neighbourhood of $\Deltanull$, i.e. $U_{0}\in\cU_{0}$. Hence $(x,y)\in\overline{U_{0}}\cap\Xnull^2$, and there exists $(x',y')\in U_{0}\cap(A\times B)$. So there is $g\in G$ such that 
$(x',y')\in gV\cap(A\times B)\cap(\Xnull\times \Xnull)$, which means that $x'\in A\cap \Xnull$, $y'\in B\cap \Xnull$ and $(g^{-1}x',g^{-1}y')\in V$.

Conversely, suppose that $(x,y)$ belongs to the set on the r.h.s.~of (\ref{eq:PR-characterization}), and consider any $U_{0}=U\cap\Xnull^2\in\cU_{0}$ with a $G$-invariant neighbourhood $U\in\cOsq$  of $\Deltanull$. Given any neighbourhood $O\in\cOsq$ of $(x,y)$, fix neighbourhoods $A\in\cO$ of $x$ and $B\in\cO$ of $y$ such that $A\times B\subseteq O$. Then there are $x'\in A\cap \Xnull$, $y'\in B\times \Xnull$ and $g\in G$ such that $(gx',gy')\in U$, i.e. $(x',y')\in g^{-1}U\cap (\Xnull\times \Xnull)=U_{0}$. Hence $(x,y)\in\overline{U_{0}}\cap \Xnull^2$, and as this holds for each $U_{0}\in\cU_{0}$, we have $(x,y)\in \Qnull$.
\end{proof}

\begin{remark}
The first part of the proof goes through without any changes, if $V\in\cO^2$ just has non-empty intersection with $\Delta_0$, because also in this case the set $U\in\cO^2$ is a $G$-invariant neighbourhood of $\Delta_0$: for $(z,z)\in\Delta_0=\Delta\cap(X_0\times X_0)$ there is $\tilde g\in G$ such that $(\tilde{g}z,\tilde{g}z)\in V$, so that $(z,z)\in U$.
Therefore this lemma shows that (\ref{eq:PR-characterization}) is  the relation $Q_m(\varphi)$ of McMahon \cite{McMahon1978}, when the setting described on \cite[p.227]{McMahon1978} is specialized to the situation treated here, which is the special case where McMahon's $Z$ is the trivial one-point system and where his $X$ and $Y$ coincide. 
In that case his notion of a section collapses to that of a $G$-invariant Borel probability measure on $X$, his set $R_m(\varphi)$ coincides with our $X_{t}\times X_{t}$, and
the definition of his $Q_m(\varphi)$ coincides with that of our $Q_{t}$ and his
$S_m(\varphi)$ is our $S_t^*$.

Note also that his set $X_m$ is our $X_t$, and that $X_t$ is a Borel set under our assumptions. This setting, for the special case of minimal dynamics, is reproduced in Auslander's book \cite{Auslander1988}.
\end{remark}

\paragraph{Inclusions between the various relations}
\begin{lemma}\label{lemma:2.2}
$\Qnull\subseteq \Seq_{0}$ and hence also $S^\ast_{0}\subseteq\Seq_{0}$.
\end{lemma}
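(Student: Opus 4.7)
The plan naturally splits into the inclusion $\Qnull\subseteq\Seq_0$ (the content) and the inclusion $S^\ast_0\subseteq\Seq_0$ (a formal consequence). For the latter, each $S^\pi_0$ is obviously an $\cOnullsq$-closed, $G$-invariant equivalence relation on $\Xnull^2$, and all three properties survive under arbitrary intersection, so $\Seq_0$ itself is an $\cOnullsq$-closed, $G$-invariant equivalence relation; since by definition $S^\ast_0$ is the smallest such relation containing $\Qnull$, it is enough to establish the first inclusion.

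For the main inclusion, I fix $(x,y)\in\Qnull$ together with an arbitrary equivariant $\pi:\Xnull\to Y$ onto some compact metric group $Y$ on which $G$ acts minimally by translation, and I aim to show $\pi(x)=\pi(y)$. Invoking the remark preceding Lemma~\ref{lemma:PR-characterization}, I may assume that $Y$ carries a metric $d_Y$ which is $G$-invariant under the translation action. The heart of the argument is to convert continuity of $\pi$ into an open neighborhood of $\Deltanull$ on which $\pi$ has small oscillation. Given $\epsilon>0$, for each $z\in\Xnull$ choose an open $U_z\in\cO$ with $\pi(U_z\cap\Xnull)\subseteq B_{d_Y}(\pi(z),\epsilon/3)$, and set $V_\epsilon:=\bigcup_{z\in\Xnull}(U_z\times U_z)\in\cOsq$. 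Then $V_\epsilon$ is open in $X^2$, contains $\Deltanull$, and every $(u,v)\in V_\epsilon\cap\Xnull^2$ satisfies $d_Y(\pi(u),\pi(v))<2\epsilon/3$ by the triangle inequality.

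Next, shrinking $A\in\cO$ around $x$ and $B\in\cO$ around $y$ so that $\pi(A\cap\Xnull)\subseteq B_{d_Y}(\pi(x),\epsilon/3)$ and $\pi(B\cap\Xnull)\subseteq B_{d_Y}(\pi(y),\epsilon/3)$, the characterization of $\Qnull$ provided by Lemma~\ref{lemma:PR-characterization} applied to $A$, $B$ and $V_\epsilon$ supplies $x'\in A\cap\Xnull$, $y'\in B\cap\Xnull$ and $g\in G$ with $(gx',gy')\in V_\epsilon$; since $\Xnull$ is $G$-invariant, the preceding oscillation bound yields $d_Y(\pi(gx'),\pi(gy'))<2\epsilon/3$, which by $G$-equivariance of $\pi$ combined with $G$-invariance of $d_Y$ collapses to $d_Y(\pi(x'),\pi(y'))<2\epsilon/3$. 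Two applications of the triangle inequality now give $d_Y(\pi(x),\pi(y))<4\epsilon/3$, and letting $\epsilon\to 0$ forces $\pi(x)=\pi(y)$, i.e. $(x,y)\in S^\pi_0$. As $\pi$ was arbitrary, $(x,y)\in\Seq_0$.

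The only genuinely delicate step is the construction of $V_\epsilon$; it hinges on the observation that continuity of $\pi$ is a statement purely about points of $\Xnull$, so a neighborhood of $\Deltanull$ rather than of the full diagonal $\Delta$ is what one needs, and such a neighborhood can be assembled pointwise from the domains of continuity of $\pi$. It is worth noting that, in contrast to the proof of Lemma~\ref{lemma:PR-characterization}, the $G$-invariance requirement built into the definition of $\cU_0$ is not needed here: the isometric translation action on $Y$ absorbs the extra group element $g$ for free, so $V_\epsilon$ may be chosen without the $G$-saturation step.
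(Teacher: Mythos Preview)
Your proof is correct. Both your argument and the paper's rest on the same two ingredients --- continuity of $\pi$ on $\Xnull$ and the isometric $G$-action on $Y$ --- but the executions differ slightly. The paper argues by contradiction directly from the definition $\Qnull=\bigcap_{U_0\in\cU_0}\overline{U_0}\cap\Xnull^2$: assuming $\delta:=d_Y(\pi(x),\pi(y))>0$, it pulls back the open $\delta/2$-tube around the diagonal of $Y\times Y$ through $\pi\times\pi$, $G$-saturates the result, and exhibits a specific $U_0\in\cU_0$ whose closure misses $(x,y)$. You instead proceed directly via the regional-proximal characterization of Lemma~\ref{lemma:PR-characterization} with an $\epsilon$-argument, assembling the neighborhood $V_\epsilon$ of $\Deltanull$ pointwise from continuity of $\pi$. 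Your closing observation --- that Lemma~\ref{lemma:PR-characterization} allows an arbitrary $V\in\cOsq$ containing $\Deltanull$, so no $G$-saturation of $V_\epsilon$ is needed because the isometry on $Y$ absorbs the group element $g$ --- is a genuine streamlining relative to the paper's presentation, which goes through the extra step of manufacturing a $G$-invariant $U_0\in\cU_0$.
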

\begin{proof}
Let $\pi:\Xnull\to Y$ be as in \eqref{eq:Steq-def},
and recall that w.l.o.g. we can assume that the action of $G$ on $Y$ is isometric.
Let $(x,y)\in \Qnull$. Suppose for a contradiction that $\pi(x)\neq\pi(y)$. 
Let $\delta:=d(\pi(x),\pi(y))>0$.
There are neighbourhoods $A\in\cO$ of $x$ and $B\in\cO$ of $y$ such that 
\begin{displaymath}
d(\pi(gx'),\pi(gy'))
=
d(g\pi(x'),g\pi(y'))
=
d(\pi(x'),\pi(y'))
>\delta/2
\end{displaymath}
for all $x'\in A\cap \Xnull$, $y'\in B\cap \Xnull$ and $g\in G$.

Let $M:=\{(y,y'): y,y'\in Y,\ d(y,y')<\delta/2\}$. The set $M$ is an open neighbourhood of the diagonal in $Y\times Y$, and as the metric is translation invariant, the set $M$ is $G$-invariant. Furthermore,
$(\pi(gx'),\pi(gy'))\not\in {M}$ for all
$x'\in A\cap \Xnull$, $y'\in B\cap \Xnull$ and $g\in G$. Let  $\tilde V:=\bigcup_{g\in G}g((\pi\times\pi)^{-1}M)$.
Then $\tilde{V}\subseteq \Xnull^2$ and $(A\times B)\cap\tilde{V}=\emptyset$.

As $\pi:\Xnull\to Y$ is continuous, $\tilde V\in\cOnullsq$ is a $G$-invariant neighbourhood of $\Deltanull$.
Hence $\tilde V=V\cap\Xnull^2$ for some $V\in\cOsq$. Let $U:=\bigcup_{g\in G}gV$. This set is clearly $\cOsq$-open and $G$-invariant, and it contains $\Deltanull$. Note also that
\begin{displaymath}
U_{0}
=
\left(\bigcup_{g\in G}gV\right)\cap\Xnull^2
=
\bigcup_{g\in G}g\left(V\cap\Xnull^2\right)
=
\bigcup_{g\in G}g\tilde{V}
=\tilde{V}\ .
\end{displaymath}
Hence $U_{0}\in\cU_{0}$ and $(A\times B)\cap U_{0}=(A\times B)\cap \tilde{V}=\emptyset$.
Therefore $(x,y)\not\in\overline{U_{0}}$, which contradicts $(x,y)\in \Qnull$.
\end{proof}

\subsection{The role of invariant measures supported by the transitive points}

We follow McMahon \cite{McMahon1978} and Auslander \cite{Auslander1988} in order to study 
the relation between $\Qnull$ and $S^\ast_{0}$. Although some parts of their proofs carry over directly, we prefer to give full details here. 
\\[3mm]
\textbf{General assumptions and notations}
\begin{compactenum}[$\bullet$]
\item $\cN$ denotes the family of all closed $G$-invariant subsets of $X^2$.
\item For any $N\in\cN$ and $x\in X$ denote by $N_x:=\{y\in X: (x,y)\in N\}$ the $x$-section of $N$.
\item We fix a $G$-invariant Borel probability measure $\lambda$ on $X$. As $X$ is compact metrizable, $\lambda$ is regular.
\end{compactenum}

\begin{lemma}[\cite{McMahon1978,Auslander1988}]\label{lemma:McM-A}
Let $N\in\cN$. Then
\begin{compactenum}[a)]
\item $N_{gx}=gN_x$ and $\lambda(N_{gx})=\lambda(N_x)$ for all $x\in X$ and $g\in G$.
\item The map $x\mapsto\lambda(N_x)$ ($x\in X$) is upper semicontinuous.
\item  $\lambda(N_x)\leqslant\lambda(N_{x'})$ for all $x\in X$ and $x'\in \overline{Gx}$.
\item  $\lambda(N_x)\leqslant\lambda(N_{x'})$ for all $x\in X_t$ and $x'\in X$.
\item  $\lambda(N_x)=\lambda(N_{x'})$ for all $x,x'\in X_t$.
\item The map $x\mapsto \lambda(N_x)$ $(x\in X)$ is continuous at each $x\in X_t$.
\item $\lambda(N_{gx}\triangle N_{gx'})=\lambda(N_x\triangle N_{x'})$ for all $x,x'\in X$ and $g\in G$.
\item The map $(x,x')\mapsto\lambda(N_x\triangle N_{x'})$ ($x,x'\in X$) is continuous
at each $(x,y)\in X_t\times X_t$.
\end{compactenum}
\end{lemma}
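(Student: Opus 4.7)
My plan is to work through (a)--(h) in the order listed, since each leans on its predecessors. Parts (a) and (g) are immediate from the $G$-invariance of $N$ together with that of $\lambda$: writing $(gx,y)\in N$ iff $(x,g^{-1}y)\in N$ gives $N_{gx}=gN_x$, whereupon $\lambda(N_{gx})=\lambda(N_x)$ and $N_{gx}\triangle N_{gx'}=g(N_x\triangle N_{x'})$ by direct computation. Part (b) is where the closedness of $N$ enters. If $x_n\to x$ and $y\in N_{x_n}$ for infinitely many $n$, a subsequence gives $(x_{n_k},y)\to (x,y)\in N$ because $N$ is closed, so $y\in N_x$; hence $\limsup_n \1_{N_{x_n}}(y)\leq \1_{N_x}(y)$ pointwise, and reverse Fatou yields upper semicontinuity of $x\mapsto\lambda(N_x)$.

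Parts (c)--(f) then fall out quickly. For (c), I would choose $g_n\in G$ with $g_n x\to x'$: by (a), $\lambda(N_{g_n x})=\lambda(N_x)$ for every $n$, and by (b) the limit is bounded above by $\lambda(N_{x'})$. Part (d) is (c) specialized to $x\in X_t$, where $\overline{Gx}=X$, and (e) is (d) applied in both directions. For (f), upper semicontinuity from (b) gives $\limsup_n \lambda(N_{x_n})\leq \lambda(N_x)$, while (d) applied with $x\in X_t$ supplies the uniform lower bound $\lambda(N_{x_n})\geq \lambda(N_x)$; the squeeze gives continuity of $x\mapsto\lambda(N_x)$ at every transitive point.

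The substantive work is (h). The naive attempt to iterate the scheme of (f) on the product $X\times X$ fails, because the diagonal $G$-action on $X^2$ typically has far fewer transitive points than $X_t\times X_t$. Instead I would use the identity
\begin{equation*}
\lambda(N_x\triangle N_{x'})=\lambda(N_x)+\lambda(N_{x'})-2\lambda(N_x\cap N_{x'})
\end{equation*}
and reduce continuity at $(x,x')\in X_t\times X_t$ to continuity of $f(x,x'):=\lambda(N_x\cap N_{x'})$ there. Repeating the closedness argument of (b) (if $y_n\to y$ with $y_n\in N_{x_n}\cap N_{x'_n}$ and $(x_n,x'_n)\to (x,x')$, then $y\in N_x\cap N_{x'}$) shows that both $f$ and $(x,x')\mapsto\lambda(N_x\cup N_{x'})$ are upper semicontinuous on $X\times X$. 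Their sum equals $\lambda(N_x)+\lambda(N_{x'})$, which by (f) is continuous at each $(x,x')\in X_t\times X_t$; and a sum of two upper semicontinuous functions equalling a continuous function forces each summand to be lower (hence fully) semicontinuous at that point. So $f$ is continuous at every $(x,x')\in X_t\times X_t$, and (h) follows. The main obstacle I anticipate is spotting this USC-squeeze trick, since the closedness of $N$ by itself does not yield continuity of $f$, and the direct analog of (f) on $X^2$ is not available.
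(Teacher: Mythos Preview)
Your argument is correct and follows essentially the same line as the paper's. The only notable differences are that for (b) the paper uses outer regularity of $\lambda$ together with compactness of $N$ (finding an open $V\supseteq N_x$ with $\lambda(V)<\lambda(N_x)+\epsilon$ and a neighbourhood of $x$ on which $N_{x'}\subseteq V$), whereas you use closedness of $N$ and reverse Fatou; and for (h) the paper simply writes ``this follows from f)'', while you spell out the upper-semicontinuity squeeze on $\lambda(N_x\cap N_{x'})+\lambda(N_x\cup N_{x'})=\lambda(N_x)+\lambda(N_{x'})$ that justifies this claim.
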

\begin{proof}
\begin{compactenum}[a)]
\item $N_{gx}=\{y\in X: (gx,y)\in N\}=\{y\in X: (x,g^{-1}y)\in N\}=\{gy'\in X: (x,y')\in N\}=gN_x$.
\item Let $\epsilon>0$. There is an open neighbourhood of $V\supseteq N_x$ such that $\lambda(V)<\lambda(N_x)+\epsilon$. By compactness of $N$, there is a neighbourhood $U\subseteq X$ of $x$ such that $N_{x'}\subseteq V$ for all $x'\in U$. Hence $\lambda(N_{x'})\leqslant\lambda(V)<\lambda(N_x)+\epsilon$ for all $x'\in U$.
\item This follows from a) and b).
\item This is a special case of c).
\item This follows from d).
\item Let $\lambda_{min}:=\inf_{x'\in X}\lambda(N_{x'})$, let $x\in X_t$ and $\epsilon>0$. In view of b) and d), there is a neighbourhood $U$ of $x$ such that  $\lambda(N_x)=\lambda_{min}\leqslant\lambda(N_{x'})<\lambda(N_x)+ \epsilon$ for each $x'\in U$.
\item This follows from a): $\lambda(N_{gx}\triangle N_{gx'})=\lambda(gN_x\triangle gN_{x'})=\lambda(g(N_x\triangle N_{x'}))=\lambda(N_x\triangle N_{x'})$
\item This follows from f).
\end{compactenum}
\end{proof}

Following \cite{Auslander1988} we define  pseudometrics $d_N$ on $X_t$: \footnote{In \cite{Auslander1988} this is written down for minimal $(X,G)$.} given $N\in\cN$ let
\begin{displaymath}
d_N(x,x'):=\lambda(N_x\triangle N_{x'})\,.
\end{displaymath}
Their restrictions to $\Xnull^2$ yield pseudo-metrics on $\Xnull$.
For $\Xnull\subseteq X_t$ let \footnote{See also \cite[Lemma~1.2]{McMahon1978}.}
\begin{displaymath}
K_{0}(N)=\left\{(x,x')\in \Xnull^2: d_N(x,x')=0\right\}.
\end{displaymath}
If $\Xnull=X_t$, we denote the this set by $K_t(N)$. Observe that
\begin{equation*}
K_{0}(N)=K_t(N)\cap\Xnull^2\,.
\end{equation*}

\begin{remark}\label{remark:discussion}
By Lemma~\ref{lemma:McM-A}, $d_N$ is $G$-invariant and continuous,
so that $K_{0}(N)$ is a $G$-invariant  $\cOnull$-closed equivalence relation on  $\Xnull$.  
Let $Z_N^*:=\Xnull/K_0$ and define 
$d_N^*([x],[y]):=d_N(x,y)$ for $x,y\in X_0$. Then $(Z_N^*,d_N^*)$ is a metric space, and the canonical projection $\pi_{N}:\Xnull\to Z_N^*$ is continuous. As $K_0(N)$ is $G$-invariant, $G$ acts in a canonical way on $Z_N^*$, and this action is isometric. Hence it extends isometrically to the completion of $Z_N^*$, which we denote by $X_{N}$. As $Z_N^*$ is the continuous image of a separable space, it is separable, and so is its completion $(X_{N},d_{N})$. Finally, as $Z_N^*$ is the continuous image of a subset $X_0$ of the set of transitive points, 
also the action of $G$ on $Z_N^*$ is topologically transitive,
 and as that action is equicontinuous, the action of $G$ on $X_{N}$ is in fact minimal.
In order to conclude that $\Seq_{0}\subseteq S_{0}^{\pi_{N}}=K_{0}(N)$, we would need to know that $(X_{N},d_{N})$ is compact. As this space is complete by construction, all that remains to be proved is that it is totally bounded, which is achieved in the following lemma.
\end{remark}

\begin{lemma}\label{lemma:elementary-2}
Let $Z$ be a separable metric space on which $G$ acts isometrically and transitively. If there exists a
finite, non-trivial, $G$-invariant Borel measure $\mu$ on $Z$, then $Z$ is totally bounded.
\end{lemma}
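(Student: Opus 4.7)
The plan is to argue by contradiction: if $Z$ were not totally bounded, there would be infinitely many pairwise disjoint balls, each of mass bounded below by a common positive constant, contradicting the finiteness of $\mu$. Three ingredients are needed.

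First, I would upgrade topological transitivity to minimality, using only that $G$ acts by isometries. Suppose $Gx_0$ is dense. Given any $y,z\in Z$ and any $\varepsilon>0$, choose $g_1,g_2\in G$ with $d(g_1 x_0,y)<\varepsilon/2$ and $d(g_2 x_0,z)<\varepsilon/2$. Then $g:=g_2 g_1^{-1}$ satisfies
\begin{equation*}
d(gy,z)\leqslant d(g_2 g_1^{-1}y,g_2 x_0)+d(g_2 x_0,z)=d(y,g_1 x_0)+d(g_2 x_0,z)<\varepsilon,
\end{equation*}
so $Gy$ is dense. Hence every orbit is dense.

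Second, I would establish a uniform positive lower bound for the $\mu$-measure of balls of a given radius. Since $\mu$ is nontrivial, pick $y_0\in\supp(\mu)$; then $\mu(B(y_0,r))>0$ for all $r>0$. For arbitrary $z\in Z$ and $r>0$, minimality yields $g\in G$ with $d(gy_0,z)<r/2$, whence $B(gy_0,r/2)\subseteq B(z,r)$. Combining $G$-invariance of $\mu$ with the fact that $g$ is a Borel isomorphism (being an isometry), one gets
\begin{equation*}
\mu(B(z,r))\geqslant\mu\bigl(gB(y_0,r/2)\bigr)=\mu(B(y_0,r/2))>0.
\end{equation*}

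Third, assume for contradiction that $Z$ is not totally bounded. Then there exist $\varepsilon>0$ and a sequence $(z_n)_{n\in\N}\subseteq Z$ with $d(z_n,z_m)\geqslant\varepsilon$ for $n\neq m$, so the balls $B(z_n,\varepsilon/2)$ are pairwise disjoint. Setting $c:=\mu(B(y_0,\varepsilon/4))>0$, the second step gives $\mu(B(z_n,\varepsilon/2))\geqslant c$ for each $n$, hence
\begin{equation*}
\mu(Z)\geqslant\sum_{n=1}^\infty\mu(B(z_n,\varepsilon/2))\geqslant\sum_{n=1}^\infty c=\infty,
\end{equation*}
contradicting the finiteness of $\mu$.

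The only mildly delicate point is the first step (promoting transitivity to minimality under the isometry hypothesis); everything afterwards is a standard packing argument. Separability of $Z$ is not really used in this argument, but it guarantees that the sequence $(z_n)$ is at most countable, which is all we need for the countable additivity of $\mu$ applied to the disjoint balls.
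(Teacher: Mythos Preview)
Your proof is correct and uses the same core idea as the paper's: assuming total boundedness fails, pack infinitely many pairwise disjoint balls and use the isometric action to transport one ball into another and compare measures. The paper arranges the contradiction in the opposite direction---it shows every $\epsilon$-ball has measure $0$ (since it can be isometrically translated into each of the disjoint $2\epsilon$-balls, whose measures must tend to $0$) and then invokes separability to cover $Z$ by countably many such balls and conclude $\mu=0$---whereas you bound each ball's measure from below by a fixed $c>0$ and obtain $\mu(Z)=\infty$. Your explicit upgrade from topological transitivity to minimality is precisely what the paper uses tacitly when it asserts ``this argument applies to each ball $B_\epsilon(z_1)$, $z_1\in Z$''. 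One small correction to your closing remark: separability \emph{is} used in your argument as well, namely to guarantee $\supp(\mu)\neq\emptyset$ (equivalently, that some ball of radius $\varepsilon/4$ has positive $\mu$-measure); this is the standard place where second countability enters.
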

\begin{proof}
Suppose for a contradiction that $Z$ is not totally bounded.
Then there is $\epsilon>0$ such that $Z$ cannot be covered by finitely many $4\epsilon$-balls.
We construct inductively an infinite sequence $z_{1},z_{2},\dots$ of points in $Z$ such that the $2\epsilon$-balls $B_{2\epsilon}(z_{i})$ are pairwise disjoint: 
Fix $z_1\in Z$ arbitrary. Suppose that $z_{i}$ are chosen for $i=1,\dots,k$. By choice of $\epsilon$, there is $z_{{k+1}}\not\in \bigcup_{i=1}^kB_{4\epsilon}(z_{i})$. Hence $B_{2\epsilon}(z_{{k+1}})$ is disjoint from all $B_{2\epsilon}(z_{i})$ for $i=1,\dots,k$.
As $G$ acts transitively on $Z$, there are $g_1, g_2,\dots\in G$ such
that $g_iB_{\epsilon}(z_1)\subseteq B_{2\epsilon}(z_{i})$. 
Hence $\mu(B_{\epsilon}(z_1))=\mu(g_iB_{\epsilon}(z_1))\leqslant\mu(B_{2\epsilon}(z_{i}))$, and this tends to $0$ as $i\to\infty$, because the 
$B_{2\epsilon}(z_{i})$ are pairwise disjoint.
This argument applies to each ball $B_\epsilon(z_1)$, $z_1\in Z$, and as $Z$ is separable, this would imply that $\mu$ is the zero-measure.
\end{proof}
Now we can finish the discussion from Remark~\ref{remark:discussion} with the following lemma:
\begin{lemma}\label{lemma:2.5}
If $\lambda(\Xnull)=1$, then
$\Seq_{0}\subseteq\bigcap_{N\in\cN}K_{0}(N)$.
\end{lemma}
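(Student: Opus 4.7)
The preceding discussion already positions us to apply Lemma~\ref{lemma:elementary-2} directly. Fix $N\in\cN$. From the construction we know that $X_{N,d}$ is a complete, separable metric space on which $G$ acts isometrically, and that the orbit $G\pi_{N,d}(x_0)$ of any $x_0\in\Xnull$ is dense in $X_{N,d}$ (since $Gx_0$ is dense in $\Xnull$ and $Z^\ast=\pi_{N,d}(\Xnull)$ is dense in $X_{N,d}$). The only missing link is compactness: once it is established, the earlier remark endows $X_{N,d}$ with the structure of a compact metric group on which $G$ acts by translations, $\pi_{N,d}$ becomes admissible in \eqref{eq:Steq-def}, and $\Seq_{0}\subseteq S_{0}^{\pi_{N,d}}=K_{0}(N)$ follows directly (since $\pi_{N,d}(x)=\pi_{N,d}(x')$ iff $d_N(x,x')=0$). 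Intersecting over $N\in\cN$ then gives the lemma.

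Since $X_{N,d}$ is complete by construction, compactness is equivalent to total boundedness, which by Lemma~\ref{lemma:elementary-2} will follow from the existence of a finite, non-trivial, $G$-invariant Borel measure on $X_{N,d}$. The natural candidate is the pushforward $\mu:=(\pi_{N,d})_{\ast}\lambda$, and this is the point at which the hypothesis $\lambda(\Xnull)=1$ is used. Since $\pi_{N,d}$ is only defined on $\Xnull$, which need not itself be Borel in $X$, I would first pick a Borel set $B\subseteq\Xnull$ with $\lambda(B)=1$ and extend $\pi_{N,d}$ arbitrarily but measurably to $X\setminus B$; then $\mu$ becomes a Borel probability measure on $X_{N,d}$, independent of the extension. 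Its $G$-invariance follows from the identity $\pi_{N,d}^{-1}(gA)\cap\Xnull=g\bigl(\pi_{N,d}^{-1}(A)\cap\Xnull\bigr)$ (using $G$-equivariance of $\pi_{N,d}$ on $\Xnull$ and $G$-invariance of $\Xnull$), combined with $G$-invariance of $\lambda$ and $\lambda(X\setminus\Xnull)=0$; non-triviality is automatic.

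I expect the only delicate step to be this measure-theoretic one, namely verifying that $\mu$ is a bona fide Borel $G$-invariant probability measure on $X_{N,d}$ despite the fact that $\pi_{N,d}$ lives on a possibly non-Borel domain; the assumption $\lambda(\Xnull)=1$ is exactly what makes this work, as it allows one to ignore $X\setminus\Xnull$ in every relevant step. Once that is done, Lemma~\ref{lemma:elementary-2} delivers compactness of $X_{N,d}$ essentially for free, and the remainder of the argument is the bookkeeping sketched in the first paragraph.
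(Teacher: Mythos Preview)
Your proposal is correct and follows essentially the same approach as the paper: reduce to total boundedness of $X_{N,d}$, then apply Lemma~\ref{lemma:elementary-2} to the pushforward $\mu=\lambda\circ\pi_{N,d}^{-1}$. The paper simply writes down $\mu$ without further comment, whereas you are more careful about the fact that $\Xnull$ need not be Borel; your remedy (pass to a Borel $B\subseteq\Xnull$ of full measure and extend $\pi_{N,d}$ arbitrarily off $B$) is exactly the right way to make this rigorous.
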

\begin{proof}
It suffices to show that $\Seq_{0}\subseteq S_{0}^{\pi_{N}}= K_{0}(N)$ for each
${N\in\cN}$. In view of Remark~\ref{remark:discussion} this follows from  Lemma~\ref{lemma:elementary-2}, applied to the non-trivial $G$-invariant Borel probability measure $\mu:=\lambda\circ\pi_{N}^{-1}$ on $X_{N}$.
\end{proof}

\begin{lemma}\label{lemma:2.6}
Assume that $\lambda$ has full topological support in $X$. Then
$\bigcap_{N\in\cN}K_{0}(N)\subseteq \Qnull$.
\end{lemma}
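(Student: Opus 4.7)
The plan is to argue the contrapositive: given any $(x,y)\in\Xnull^2$ that is not in $\Qnull$, I will exhibit a single set $N\in\cN$ for which $\lambda(N_x\triangle N_y)>0$, so that $(x,y)\notin K_{0}(N)$. Unfolding the definition of $\Qnull$ via $\cU_{0}$, there must exist a $G$-invariant, $\cOsq$-open neighborhood $U$ of $\Deltanull$ with $(x,y)\notin\overline{U_{0}}$, where $U_{0}=U\cap\Xnull^2$. So I can pick an $\cOsq$-open product neighborhood $A\times B_0$ of $(x,y)$ that is disjoint from $U_{0}$. Since $\Xnull$ is dense in $X_t$ and $X_t$ is residual in the Baire space $X$, $\Xnull^2$ is dense in $X^2$; combined with the openness of $U$, Remark~\ref{rem:elementary-1} upgrades this to $(A\times B_0)\cap U=\emptyset$.

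The natural candidate is $N:=X^2\setminus U$, which is closed and $G$-invariant, hence belongs to $\cN$, and satisfies $A\times B_0\subseteq N$ by construction. To produce a set of positive $\lambda$-measure inside $N_x\triangle N_y$, I refine $B_0$ using the fact that $(y,y)\in\Deltanull\subseteq U$: there are $\cO$-open neighborhoods $B_1,B_2$ of $y$ with $B_1\times B_2\subseteq U$, and I set $B:=B_0\cap B_1\cap B_2$, still an $\cO$-open neighborhood of $y$. Then $A\times B\subseteq A\times B_0\subseteq N$ forces $B\subseteq N_x$, while $B\times B\subseteq B_1\times B_2\subseteq U$ forces $B\cap N_y=\emptyset$. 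Together these give $B\subseteq N_x\setminus N_y\subseteq N_x\triangle N_y$.

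To conclude, I invoke the full-support hypothesis. Since $\lambda$ has full topological support in $X$ and $B$ is a nonempty open subset of $X$, $\lambda(B)>0$. Therefore $\lambda(N_x\triangle N_y)\geqslant\lambda(B)>0$, contradicting $(x,y)\in K_{0}(N)$. The only place where real care is needed is the passage from $(A\times B_0)\cap U_{0}=\emptyset$ to $(A\times B_0)\cap U=\emptyset$ in the first step, which is exactly what Remark~\ref{rem:elementary-1} was set up to deliver; the remainder of the argument is a routine manipulation of sections, and it is precisely at the very last step that the full-support assumption on $\lambda$ becomes indispensable, since otherwise the open set $B$ might be $\lambda$-null and the whole strategy would collapse.
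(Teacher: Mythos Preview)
Your argument is correct, and it reaches the conclusion by a genuinely different route than the paper does.

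The paper argues directly: starting from $(x,y)\in\bigcap_{N\in\cN}K_{0}(N)$ and given a neighborhood $A$ of $x$ with $A\times A\subseteq V$, it builds the specific set
\[
N:=\overline{\bigcup_{g\in G}g\left(\{y\}\times A\right)}
\]
so that $A\subseteq N_y$ by construction; then $d_N(x,y)=0$ together with the full-support hypothesis forces $A\subseteq N_x$, and unwrapping $(x,x)\in N$ produces $g\in G$ and $x'\in A$ with $(gx',gy)\in V$. Your approach is contrapositive: from $(x,y)\notin\Qnull$ you extract a witnessing $G$-invariant open $U\supseteq\Deltanull$, take $N=X^2\setminus U$, and carve out an open $B$ lying in $N_x\setminus N_y$.

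What each buys: the paper's choice of $N$ yields the extra observation (noted in its parenthetical) that in the characterization of $\Qnull$ one may always take $y'=y$; your complement construction does not see this refinement. On the other hand, your argument is arguably more transparent, since the $N$ you use is the obvious one coming straight from the definition of $\Qnull$ via $\cU_0$, and the symmetric difference is exhibited explicitly as a concrete open set.

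One minor expository point: the passage from $(A\times B_0)\cap U_0=\emptyset$ to $(A\times B_0)\cap U=\emptyset$ is really just the density of $\Xnull^2$ in $X^2$ applied to the open set $(A\times B_0)\cap U$, which you already state; Remark~\ref{rem:elementary-1} is about closures and is not quite the right citation here, though the underlying density fact is the same.
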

\begin{proof}
We follow the arguments in the proof of \cite[Theorem 8]{Auslander1988} for the minimal case: 
Let $(x,y)\in\bigcap_{N\in\cN}K_{0}(N)\subseteq \Xnull^2$,
let $V\in\cOsq$ be a neighbourhood of $\Deltanull$,
and let $A\in\cO$ be a neighbourhood of $x$. Without loss we can assume that $A\times A\subseteq V$, because $(x,x)\in\Deltanull$. Define
\begin{displaymath}
N:=\overline{\bigcup_{g\in G}g\left(\{y\}\times A\right)}^{\,\cOsq}.
\end{displaymath}
Obviously, $N$ is $\cOsq$-closed and $G$-invariant, i.e. $N\in\cN$, and
$\{y\}\times A\subseteq N$
(consider $g=e$), so that $A\subseteq N_y$.
As $(x,y)\in K_{0}(N)$, it follows that $\lambda(A\setminus N_x)\leqslant\lambda(N_{y}\setminus N_x)\leqslant d_N(x,y)=0$. As $A$ is open and $N_x$ is closed, this implies 
$A\subseteq N_x$ and hence $(x,x)\in\{x\}\times A\subseteq N$. Therefore, there are $g\in G$ and $x'\in A$ such that $(gy,gx')\in A\times A$. Hence $(gx',gy)\in A\times A\subseteq V$.
As $\Xnull$ is dense in $X$ by assumption and as $A\in\cO$, one can choose $x'\in A\cap \Xnull$. As $y\in \Xnull$, this proves that $(x,y)\in \Qnull$.
 (Observe that this proves a bit more, namely that in (\ref{eq:PR-characterization}) of Lemma~\ref{lemma:PR-characterization} the point $y'$ can be chosen to be equal to $y$. Interchanging the roles of $x$ and $y$, one could instead choose $x'=x$, see \cite[Ch.~9, Cor.~9]{Auslander1988}.)
\end{proof}

\begin{theorem}\label{theo:all-equal}
Suppose there exists a $G$-invariant Borel probability measure $\lambda$ on $X$ 
with full topological support and with $\lambda(\Xnull)=1$.
Then 
\begin{equation*}
\Qnull=S^\ast_{0}=\Seq_{0}=\bigcap_{N\in\cN}K_{0}(N)=\bigcap_{N\in\cN}K_{t}(N)\cap\Xnull^2\ ,
\end{equation*}
where the equivalence relations $K_{0}(N)$ are determined as above using $\lambda$. 
In particular, $\Seq_{0}=\Seq_t\cap\Xnull^2$.
(Observe that these identities hold for each such measure $\lambda$.)
\end{theorem}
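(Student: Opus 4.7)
The plan is to observe that the three preceding lemmas already yield a cyclic chain of inclusions that forces all four relations $\Qnull$, $S^\ast_0$, $\Seq_0$ and $\bigcap_{N\in\cN}K_0(N)$ to coincide, and that the remaining equalities are purely formal consequences of how $K_0(N)$ was defined in terms of $K_t(N)$.

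First I would note the inclusions $\Qnull\subseteq S^\ast_0\subseteq \Seq_0$. The first is by definition of $S^\ast_0$ as the smallest $\cOnullsq$-closed, $G$-invariant equivalence relation on $\Xnull^2$ containing $\Qnull$. The second follows from Lemma~\ref{lemma:2.2} combined with the fact that $\Seq_0$ is itself $\cOnull$-closed, $G$-invariant, and an equivalence relation, so that it must contain the smallest such relation above $\Qnull$. Next I would invoke Lemma~\ref{lemma:2.5}, which under the hypothesis $\lambda(\Xnull)=1$ gives $\Seq_0\subseteq\bigcap_{N\in\cN}K_0(N)$, and then Lemma~\ref{lemma:2.6}, which under the hypothesis of full topological support gives $\bigcap_{N\in\cN}K_0(N)\subseteq \Qnull$. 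Both hypotheses are in force by assumption, so chaining these yields
\begin{equation*}
\Qnull\subseteq S^\ast_0\subseteq \Seq_0\subseteq\bigcap_{N\in\cN}K_0(N)\subseteq \Qnull,
\end{equation*}
which closes the loop and forces all four sets to be equal.

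For the fourth equality $\bigcap_{N\in\cN}K_0(N)=\bigcap_{N\in\cN}K_t(N)\cap\Xnull^2$, I would simply invoke the identity $K_0(N)=K_t(N)\cap\Xnull^2$ already recorded right after the definition of $K_0(N)$, and then distribute the intersection over $N$: taking the intersection over $N\in\cN$ of both sides gives the asserted equality, since intersecting with $\Xnull^2$ commutes with arbitrary intersections.

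Finally, for the "in particular" statement, I would apply the four established equalities once more, this time to the specific choice $\Xnull=X_t$ (which is legitimate, since $\lambda(X_t)\geqslant \lambda(\Xnull)=1$ and $X_t$ is dense and $G$-invariant). That application gives $\Seq_t=\bigcap_{N\in\cN}K_t(N)$; intersecting this with $\Xnull^2$ and combining with the previous equality yields $\Seq_0=\Seq_t\cap\Xnull^2$. There is no real obstacle here — the theorem is essentially a bookkeeping step that consolidates Lemmas~\ref{lemma:2.2}, \ref{lemma:2.5} and \ref{lemma:2.6}, all the substantive work (the upper semicontinuity argument, total boundedness via Lemma~\ref{lemma:elementary-2}, and the measure-theoretic proximality argument) having been carried out there.
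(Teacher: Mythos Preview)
Your proof is correct and follows essentially the same route as the paper: the cyclic chain $\Qnull\subseteq S^\ast_0\subseteq \Seq_0\subseteq\bigcap_{N\in\cN}K_0(N)\subseteq \Qnull$ via the definition of $S^\ast_0$ and Lemmas~\ref{lemma:2.2}, \ref{lemma:2.5}, \ref{lemma:2.6}, plus the identity $K_0(N)=K_t(N)\cap\Xnull^2$. Your explicit justification of the ``in particular'' clause by specializing to $\Xnull=X_t$ is a nice addition that the paper leaves implicit.
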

\begin{proof}
We have $\Qnull\subseteq S^\ast_{0}$ by definition, $S^\ast_{0}\subseteq\Seq_{0}$ by Lemma~\ref{lemma:2.2}, $\Seq_{0}\subseteq\bigcap_{N\in\cN}K_{0}(N)=\bigcap_{N\in\cN}K_{t}(N)\cap\Xnull^2$ by Lemma~\ref{lemma:2.5}, and 
 $\bigcap_{N\in\cN}K_{0}(N)\subseteq \Qnull$ by Lemma~\ref{lemma:2.6}.
\end{proof}

\subsection{Existence of maximal equicontinuous generic factors}
\label{subsec:existence-MEGF}
The natural question that arises now is whether there actually exists a maximal equicontinuous factor of $(X_t,G)$, or,  in the terminology of \cite{HuangYe2012},
a maximal equicontinuous generic factor of $(X,G)$. We precede the proof of this fact with a more technical lemma whose proof uses the axiom of choice.

\begin{lemma}
For each measure $\lambda$ as in Theorem~\ref{theo:all-equal}, there exists an at most countable family $\cN_c\subseteq\cN$ such that
$\bigcap_{N\in\cN}K_{0}(N)=\bigcap_{N\in\cN_c}K_{0}(N)$ for all invariant $\Xnull\subseteq X_t$ with $\lambda(\Xnull)=1$.
\end{lemma}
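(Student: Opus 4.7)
The plan is to reduce the problem to a Lindel\"of argument on the second countable space $X_t^2$. The key observation is that although $\cN$ is vast, the traces $K_t(N)\subseteq X_t^2$ form a family of closed sets in a second countable space, and any such family admits a countable cofinal sub-intersection.

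First I would recall that by Remark~\ref{remark:second-countable} the space $X_t$ (with the induced topology from $X$) is second countable, hence so is the product $X_t\times X_t$. Next, for each $N\in\cN$, Lemma~\ref{lemma:McM-A}(h) guarantees that the pseudometric $d_N$ is continuous on $X_t\times X_t$, so
\begin{equation*}
K_t(N)=\{(x,x')\in X_t^2:d_N(x,x')=0\}
\end{equation*}
is closed in $X_t^2$ and its complement $U_N:=X_t^2\setminus K_t(N)$ is open.

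Now I would form $U:=\bigcup_{N\in\cN}U_N$, an open subset of $X_t^2$. As a subspace of a second countable space, $U$ is itself second countable and hence Lindel\"of. The family $\{U_N\}_{N\in\cN}$ is an open cover of $U$, so it admits a countable subcover $\{U_{N_i}\}_{i\in\N}$ with $\bigcup_i U_{N_i}=U$. Taking complements in $X_t^2$ yields
\begin{equation*}
\bigcap_{i\in\N}K_t(N_i)=X_t^2\setminus U=\bigcap_{N\in\cN}K_t(N).
\end{equation*}
Setting $\cN_c:=\{N_i:i\in\N\}$ gives a countable subfamily of $\cN$ with $\bigcap_{N\in\cN_c}K_t(N)=\bigcap_{N\in\cN}K_t(N)$.

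Finally, for any $G$-invariant $\Xnull\subseteq X_t$ with $\lambda(\Xnull)=1$, the identity $K_{0}(N)=K_t(N)\cap\Xnull^2$ (noted right after the definition of $K_{0}(N)$) allows me to intersect both sides with $\Xnull^2$ to obtain $\bigcap_{N\in\cN_c}K_{0}(N)=\bigcap_{N\in\cN}K_{0}(N)$. Crucially, $\cN_c$ was chosen using only the topology of $X_t^2$, independently of the particular $\Xnull$, which gives the uniform statement in the lemma. There is no serious obstacle here: the whole proof is a packaging of the Lindel\"of property; the only mild subtlety is to be sure the countable family does not depend on $\Xnull$, which is automatic from the above reduction to $K_t(N)$.
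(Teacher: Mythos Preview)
Your proof is correct and follows essentially the same route as the paper: both arguments exploit second countability of $X_t^2$ to extract a countable subfamily $\cN_c$ with $\bigcap_{N\in\cN_c}K_t(N)=\bigcap_{N\in\cN}K_t(N)$, and then intersect with $\Xnull^2$ via $K_{0}(N)=K_t(N)\cap\Xnull^2$. The only cosmetic difference is that the paper works explicitly with a fixed countable base of $X^2$ and selects one $N_j$ for each basic open set appearing in some complement, whereas you invoke the Lindel\"of property directly; these are interchangeable packagings of the same idea.
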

\begin{proof}
As $X$ is second countable, there is a countable base $O_1,O_2,\dots$ for the topology of $X^2$. Hence, for each $N\in\cN$, there is an index set $J_N\subseteq\N$ such that
$X_t^2\setminus K_{t}(N)=\bigcup_{j\in J_N}O_j\cap X_t^2$. Let $J=\bigcup_{N\in\cN}J_N$. Using the axiom of choice, we can associate with each $j\in J$ a set $N_j\in\cN$ such that $j\in J_{N_j}$, i.e. such that $O_j\cap X_t^2\subseteq X_t^2\setminus K_t(N_j)$. Let $\cN_c=\{N_j:\ j\in J\}$. Then
\begin{displaymath}
\begin{split}
X_t^2\setminus\left(\bigcap_{N\in\cN_c}K_{t}(N)\right)
&\subseteq
X_t^2\setminus\left(\bigcap_{N\in\cN}K_{t}(N)\right)
=
\bigcup_{N\in\cN}X_t^2\setminus K_{t}(N)
=
\bigcup_{j\in J}O_j\cap X_t^2\\
&\subseteq
\bigcup_{j\in J}X_t^2\setminus K_{t}(N_j)
=
\bigcup_{N\in\cN_c}X_t^2\setminus K_{t}(N)
=
X_t^2\setminus\left(\bigcap_{N\in\cN_c}K_{t}(N)\right).
\end{split}
\end{displaymath}
Hence we have equalities everywhere in this chain of inclusions. As $K_t(N)\subseteq X_t^2$ for all $N\in\cN$, this implies 
$\bigcap_{N\in\cN_c}K_t(N)=\bigcap_{N\in\cN}K_t(N)$ and hence
\begin{equation*}
\bigcap_{N\in\cN}K_{0}(N)
=
\Xnull^2\cap\bigcap_{N\in\cN}K_{t}(N)
=
\Xnull^2\cap\bigcap_{N\in\cN_c}K_{t}(N)
=
\bigcap_{N\in\cN_c}K_{0}(N)\,.
\end{equation*}
\end{proof}

The following theorem is the announced more detailed version of 
Theorem~\ref{theo:max-factor-prelim}. The additional information is contained in its part~\ref{item:c}).

\begin{theorem}\label{theo:max-factor}
Suppose that $(X,G)$ is ergodic, i.e. there exists an ergodic $G$-invariant Borel probability measure $\lambda$ on $X$ with full topological support.
\begin{compactenum}[a)]
\item $(X,G)$ has a maximal equicontinuous generic factor $\pi:(X,G)\gen(Z,G)$, where $(Z,G)$ is a compact, metrizable, equicontinuous system,
unique up to isomorphism, and one can choose $(Z,G)$ as a minimal group translation. 
\item \label{item:c}
If $\Xnull$ is a $G$-invariant subset of $X_t$ with $\lambda(\Xnull)=1$, if $\pi_Y:\Xnull\to Y$ is another homomorphism to a minimal equicontinuous compact system $(Y,G)$, then there is a factor map $\pi':(Z,G)\to (Y,G)$ such that $\pi_Y=\pi'\circ\pi|_{\Xnull}$. In particular, $\pi_Y$~extends continuously to~$X_t$. 

\end{compactenum}
\end{theorem}
\begin{proof} Note first that $\lambda(X_t)=1$ by Remark~\ref{remark:first}. \quad\\
a)\ Enumerate the countable set $\cN_c$ from the previous lemma as $\cN_c=\{N_n:n\in\N\}$. Define $D:X_t\times X_t\to\R$ as $D(x,y)=\sum_{n\in\N}2^{-n}d_{N_n}(x,y)$, where $d_{N_n}$ is the $G$-invariant continuous pseudo-metric on $X_t$ associated with $N_n$. Then also $D$ is a $G$-invariant continuous pseudo-metric on $X_t$ and 
\begin{equation}\label{eq:cXtD}
\cX_t^D:=\left\{(x,y)\in X_t^2: D(x,y)=0\right\}
=
\bigcap_{n\in\N}K_t(N_n)
=
\bigcap_{N\in\cN}K_t(N)
\end{equation}
is an $\cOsq_t$-closed $G$-invariant equivalence relation on $X_t$. Let $Z^*:=X_t/\cX_t^D$ and define 
$d_Z([x],[y]):=D(x,y)$ for $x,y\in X_t$. Then $(Z^*,d_Z)$ is a metric space, 
the canonical projection $\pi:X_t\to Z^*$ is continuous,
and just as in Remark~\ref{remark:discussion} one shows that $(Z^*,d_Z)$ is compact and that $G$ acts in a canonical way isometrically on $(Z^*,d_Z)$.
%
As $\cX_t^D=\Seq_t$ by Theorem~\ref{theo:all-equal}, and as
$d_Z([x],[y])=D(x,y)$ for $x,y\in X_t$, the system $(Z,G)$ is a MEGF for $(X,G)$.

We turn to the proof of the uniqueness (up to conjugacy) of the MEGF:
Suppose that $\pi_Y:(X,G)\gen(Y,G)$ is a further MEGF of $(X,G)$.
Then there are
factor maps $\pi':Z\to Y$ and $\pi'':Y\to Z$ such that $\pi_Y=\pi'\circ\pi$ and $\pi=\pi''\circ\pi_Y$. Hence $\pi=\pi''\circ\pi'\circ\pi$, which implies that $(\pi''\circ\pi')|_{\pi(X_t)}=\id_{\pi(X_t)}$.  As $\pi(X_t)$ is dense in $Z$ and as $\pi''\circ\pi':Z\to Z$ is continuous, this implies that $\pi''\circ\pi'=\id_Z$. In particular, $\pi'$ is \oneone, i.e. the factor map $\pi':Z\to Y$ is a homeomorphism.
\\[1mm]
b)\ We have to prove that $(Z,G)$ is maximal among all compact metrizable equicontinuous generic factors of $(X,G)$ (in the strengthened sense of part \ref{item:c}) of the theorem). So suppose there are a  $G$-invariant subset $\Xnull\subseteq X_t$
with $\lambda(\Xnull)=1$ and a homomorphism $\pi_Y:\Xnull\to Y$ to a compact metrizable equicontinuous minimal system $(Y,G)$.
Then  $\cX_t^D\cap\Xnull^2=\bigcap_{N\in\cN}K_{0}(N)=\Seq_{0}\subseteq S^{\pi_Y}_{0}$ by Theorem~\ref{theo:all-equal}. It follows that $\pi_Y$ factorizes over $\pi|_{\Xnull}:\Xnull\to Z$.
\end{proof} 
%
%
\subsection{Weak mixing and maximal equicontinuous generic factors}
\label{subsec:weak-mixing}
\begin{proof}[Proof of Theorem~\ref{theo:weak-mixing}]
a) We follow the proof of Theorem 2.7 in \cite{HuangYe2012}.
Denote by $\pi:X_t\to Z$ the MEGF of $(X,G)$.
As $(X,G)$ is weakly scattering, there is $(x_0,z_0)\in X_t\times Z$ such that $\overline{G(x_0,z_0)}=X\times Z$.
Define $\phi:X_t\times Z\to Z$,
$\phi(x,z)=z^{-1}\pi(x)$. Then $\phi$ is continuous, and $\phi(gx_0,gz_0)=(gz_0)^{-1}\pi(gx_0)=z_0^{-1}g^{-1}g\pi(x_0)=\phi(x_0,z_0)$ for all $g\in G$. Hence $\phi(x,z)=\phi(x_0,z_0)$ for all $(x,z)\in X_t\times Z$. In particular,
$\pi(x)=\phi(x,e)=\phi(x_0,z_0)$ for all $x\in X_t$, so that $Z=\overline{\pi(X_t)}=\{\phi(x_0,z_0)\}$ is a singleton.\\
%
b) Let $(X,G)$ be an ergodic topological dynamical system, and
denote by $\lambda$ any $G$-invariant Borel probability measure on $X$ with $\lambda(X_t)=1$.
Assume that the MEGF of $(X,G)$ is trivial.
That means that the equicontinuous structure relation $\Seq_t$ defined in (\ref{eq:Steq-def}) is maximal, i.e. $\Seq_t=X_t^2$. Hence $K_t(N)=X_t^2$ for all $N\in\cN$ by Lemma~\ref{lemma:2.5}, where, as before, $\cN$ denotes the family of all closed $G$-invariant subsets of $X^2$. Therefore $\lambda(N_{x'}\triangle N_x)=0$ for all $N\in\cN$ and all $x,x'\in X_t$.

In order to prove that $(X,G)$ is weakly mixing we must show that each $N\in\cN$ is either nowhere dense in $X^2$ or equal to $X^2$. So assume that $N\in\cN$ is not nowhere dense, i.e. that $\inn(N)\neq\emptyset$. Then there are open sets $U,V\subseteq X$ such that $U\times V\subseteq N$. Fix any $x_0\in X_t\cap U$. Then $V\subseteq N_{x_0}$, and
$\lambda(N_x\triangle N_{x_0})=0$ for all $x\in X_t$. As $\lambda$ has also full topological support, this implies $V\subseteq N_x$ for all $x\in X_t$, i.e. $X_t\times V\subseteq N$. 
Let $W:=\bigcup_{g\in G}gV$. Then $W$ is open and $G$-invariant, and $W$ is dense in $X$, because $(X,G)$ is topologically transitive. It follows that
\begin{equation*}
X^2=\overline{X_t\times W}
=\overline{\bigcup_{g\in G} g(X_t\times V)}\subseteq\overline{\bigcup_{g\in G}gN}=\overline{N}=N\ .
\end{equation*}
\end{proof}

\section{Maximal equicontinuous generic factors and weak model sets}
\label{sec:MEF-for-WMS}

The dynamics of weak model sets are an excellent testing ground for the relevance of MEGFs. We start by summarizing some essential notations and results from \cite{KR2015}.

\subsection{Some recollections on weak model sets}\label{assnot}
\paragraph{Assumptions and notations}
\begin{enumerate}[(1)]
\item $G$ and $H$ are \emph{locally compact second countable abelian groups} with Haar measures $m_G$ and $m_H$. Then the product group $G\times H$ is locally compact second countable abelian as well, and we choose $m_{G\times H}=m_G\times m_H$ as Haar measure on $G\times H$. 
\item $\LL\subseteq G\times H$ is a \emph{cocompact lattice}, i.e., a discrete subgroup whose quotient space $(G\times H)/\LL$ is compact. Thus $\hX:=(G\times H)/\LL$ is a compact second countable abelian group.
Denote by $\piG:G\times H\to G$ and $\piH:G\times H\to H$ 
the canonical projections. We assume that 
$\piG|_\LL$ is \oneone and that
$\piH(\LL)$ is dense in $H$.
\item $G$ acts on $G\times H$ by translation: $gx:=(g,0)+x$.
\item 
Elements of $G\times H$ are denoted as $x=(x_G,x_H)$, elements of $\hX$ as 
$\hx$ or as $x+\LL=(x_G,x_H)+\LL$, when a representative $x$ of $\hx$ is to be stressed. We normalise the Haar measure $m_\hX$ on $\hX$ such that $m_\hX(\hX)=1$. Thus $m_\hX$ is a probability measure. 

\item The \emph{window} $W$ is a compact subset of $H$. We assume that $m_H(W)>0$.
\end{enumerate}

\paragraph{Consequences of the assumptions}\label{en:ass}

\begin{enumerate}[(1)]
\item Being locally compact second countable abelian groups, $G$, $H$ and $G\times H$  are  metrizable with a translation invariant metric with respect to which they are complete metric spaces. In particular they have the Baire property.
As such groups are $\sigma$-compact, $m_G$, $m_H$ and $m_{G\times H}$ are $\sigma$-finite.
\item As $G\times H$ is $\sigma$-compact, the lattice $\LL\subseteq G\times H$ is at most countable.  Note that $G\times H$ can be partitioned by shifted copies of the relatively compact fundamental domain $X$. This means that $\LL$ has a positive finite point density $\dL=1/m_{G\times H}(X)$. We thus have $m_\hX(\hat A)=\dL\cdot m_{G\times H}(X\cap (\pihX)^{-1}(\hat A))$ for any measurable $\hat A\subseteq \hX$, where $\pihX: G\times H\to \hX$ denotes the quotient map. As a factor map between topological groups, $\pihX$ is open.
\item $\LL$ acts on $(H,m_H)$ by $h\mapsto\ell_H+ h$ metrically transitively, i.e., for every measurable $A\subseteq H$ such that $m_H(A)>0$ there exist at most countably many $\ell^i\in \LL$ such that $m_H((\bigcup_i (\ell_H^i+A))^c)=0$,  see \cite[Ch.~16, Ex.~1]{Kharazishvili2009}.
\item The action $\hx\mapsto(g,0)+\hx$ of $G$ on $\hX$ is minimal and uniquely ergodic.

\item Denote by $\cM$ and $\cMG$ the spaces of all locally finite measures on $G\times H$ and $G$, respectively. They are endowed with the topology of vague convergence. 
As $G$ and $G\times H$ are complete metric spaces, this is a Polish topology, see \cite[Thm.~A.2.3]{Kallenberg2001}. 
\end{enumerate}

\paragraph{The objects of interest}
The pair $(\LL,W)$ assigns to each point $\hx\in \hX$ a discrete point set in $G\times H$. 
Such point sets $P$ are identified with the measures $\sum_{y\in P}\delta_y\,\in\cM$.
More precisely:
\begin{enumerate}[(1)]
\item For $\hx=x+\LL\in\hX$ define
\begin{equation}\label{eq:nuW-def}
\nuW(\hx):=\sum_{y\in (x+\LL)\cap(G\times W)}\delta_y\ .
\end{equation}
It is important to understand $\nuW$ as a map from $\hX$ to $\cM$. 
The canonical projection $\piG:G\times H\to G$ projects measures $\nu\in\cM$ to measures $\piG_*\nu$ on $G$ defined by $\piG_*\nu(A):=\nu((\piG)^{-1}(A))$. We abbreviate 
\begin{equation}\label{eq:nuWG-def}
\nuWG:=\piG_*\circ\nuW:\hX\to\cMG
\end{equation}
The set of continuity points of $\nuW$ and $\nuWG$ is a dense $G_\delta$-subset of $\hX$.
\item\label{item:spaces}
 Denote by 
\begin{itemize}
\item[-] $\MW$ the vague closure of $\nuW(\hX)$ in $\cM$,
\item[-] $\MWG$ the vague closure of $\nuWG(\hX)$ in $\cMG$,
\end{itemize}
The group $G$ acts continuously by translations on all these spaces:
$(g\nu)(A):=\nu(g^{-1}A)$.
As $\nuW(\hx)(g^{-1}A)=(g\nuW(\hx))(A)=\nuW(g\hx)(A)$, it is obvious that all $\nuW(\hx)$ are uniformly translation bounded, and it follows from \cite[Thm.~2]{BaakeLenz2004} that both spaces are compact.
\item $\QM:=m_\hX\circ {\nuW}^{-1}$ and 
$\QMG:=m_\hX\circ (\nuWG)^{-1}$ are the \emph{Mirsky measures} on 
$\MW$ and $\MWG$, respectively. Note that $\QMG=\QM\circ (\piG_*)^{-1}$.
\end{enumerate}
Observe that $\MWG$ is the space that contains the weak model sets of primary interest.

\subsection{The MEGF of the Mirsky measure}
The following facts are taken from \cite{KR2015} and \cite{KR2018}:
\begin{enumerate}[(1)]
\item Facts about the MEF \cite[Theorem 1]{KR2015}:
\begin{compactenum}[a)]
\item If $\inn(W)\neq\emptyset$, then $(\hX,G)$ is the MEF of  $(\MW,G)$.
\item If $\inn(W)=\emptyset$, then the MEFs of $(\MW,G)$ and $(\MWG,G)$ are trivial.
\end{compactenum}
\item Facts about the KF \cite[Theorems 2]{KR2015} and \cite[Theorem B1]{KR2018}:
\begin{compactenum}[a)]
\item $(\hX,m_\hX, G)$ is the KF of $(\MW,\QM,G)$. Even more, both systems are isomorphic.
\item If the window $W$ is Haar aperiodic, then the same is true for the system $(\MWG,\QMG,G)$.
\end{compactenum}
Here $W$ is \emph{Haar aperiodic}, if $m_H\left((h+W)\triangle W\right)=0$ implies $h=0$.
\end{enumerate}
\quad\\
Denote by $X\subseteq\MW$ the topological support of $\QM$ and by $\XG\subseteq\MWG$ that of $\QMG$. Although both sets may be strictly contained in their ambient spaces, they capture the most important aspects of the dynamics. 
\begin{corollary}\label{coro:KR2015}
Statements (1) and (2) above remain true for the subsystems $(X,G)$ and $(\XG,G)$.
\end{corollary}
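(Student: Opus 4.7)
The plan is to handle the measure-theoretic statements in (2) and the topological statements in (1) separately. For (2), the argument is immediate: by the definitions $\supp(\QM)=X$ and $\supp(\QMG)=\XG$, so the inclusions $X\hookrightarrow\MW$ and $\XG\hookrightarrow\MWG$ give isomorphisms of the underlying measure-preserving systems $(X,\QM|_X,G)\to(\MW,\QM,G)$ and $(\XG,\QMG|_{\XG},G)\to(\MWG,\QMG,G)$. The Kronecker factor is a measure-isomorphism invariant, so $(\hX,m_\hX,G)$ remains the KF of $(X,\QM,G)$ (and is in fact isomorphic to it), and the Haar aperiodicity statement for $(\XG,\QMG,G)$ transfers verbatim.

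For statement (1a), that is $\inn(W)\neq\emptyset$, I would start from the MEF factor map $\pi:\MW\to\hX$ provided by \cite[Theorem~1]{KR2015} and restrict it to a continuous $G$-equivariant map $\pi|_X:X\to\hX$. Its image is a non-empty closed $G$-invariant subset of the minimal system $(\hX,G)$ and therefore equals $\hX$, so $(\hX,G)$ is indeed an equicontinuous factor of $(X,G)$. For maximality, let $\phi:X\to Y$ be any equicontinuous factor; restricting to $X_t$ gives an equicontinuous generic factor of $(X,G)$, which by Theorem~\ref{theo:cut-project-MEGF} factors through the MEGF as $\phi|_{X_t}=\phi'\circ\pi|_{X_t}$ for some continuous $\phi':\hX\to Y$. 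Because $\inn(W)\neq\emptyset$, the MEGF map (unique up to isomorphism by Theorem~\ref{theo:max-factor}) is represented by $\pi|_{X_t}$ and extends continuously to $X$ via $\pi|_X$. Hence both $\phi$ and $\phi'\circ\pi|_X$ are continuous on $X$ and coincide on the dense subset $X_t$, so $\phi=\phi'\circ\pi|_X$, as desired.

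For statement (1b), that is $\inn(W)=\emptyset$, I would argue via a fixed point. The empty configuration $0\in\cM$ (respectively $0\in\cMG$) is a $G$-fixed point, and when $\inn(W)=\emptyset$ every vague neighbourhood of $0$ has positive $\QM$-measure: given any compact $K\subseteq G\times H$, the set $\{\hx\in\hX:\nuW(\hx)(K)=0\}$ has positive $m_\hX$-measure, because $W$ has empty interior (this is essentially the mechanism by which \cite{KR2015} obtains the trivial MEF of $\MW$). Hence $0\in\supp(\QM)=X$ and, analogously, $0\in\XG$. The system $(X,G)$ is topologically transitive, since $\QM$ is ergodic with full support on $X$, so any continuous $G$-equivariant $\phi:X\to Y$ to a minimal equicontinuous $Y$ is surjective; then $\phi(0)$ is a $G$-fixed point of $Y$, and minimality forces $Y$ to be a singleton. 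Consequently the MEF of $(X,G)$ is trivial, and the same reasoning applies to $(\XG,G)$.

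The principal obstacle is the verification that $0\in X$ (and $0\in\XG$) when $\inn(W)=\emptyset$, i.e.\ that the Mirsky measure charges every vague neighbourhood of the empty configuration; this rests on the structure of $\QM$ established in \cite{KR2015}. Once this is in place, everything else amounts to combining the MEGF result Theorem~\ref{theo:cut-project-MEGF} in case (1a) with the standard fixed-point argument for topologically transitive systems in case (1b).
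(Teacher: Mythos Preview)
Your treatment of (2) matches the paper's one-line argument. For (1), however, the paper takes a much shorter and more uniform route that avoids both your forward reference to Theorem~\ref{theo:cut-project-MEGF} and the direct measure computation. The paper observes (citing \cite[Lemma~6.3]{KR2015}) that $(\MW,G)$ has a \emph{unique} minimal subsystem $M$; since $X$ is closed and $G$-invariant, $M\subseteq X$. In case (a), $M$ is an almost automorphic extension of $(\hX,G)$, so its MEF is $\hX$; as $\hX$ is already an equicontinuous factor of $X$ (by restricting the MEF map of $\MW$) and the MEF of $X$ is in turn an equicontinuous factor of $M\subseteq X$, the MEF of $X$ is squeezed to be $\hX$. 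In case (b), $M$ is the fixed point $\{\0\}$, so $\0\in X$ and the fixed-point argument finishes. The same reasoning covers $(\XG,G)$.

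Your approach is not wrong, but two points deserve comment. First, invoking Theorem~\ref{theo:cut-project-MEGF} to prove Corollary~\ref{coro:KR2015} is a forward reference; there is no logical circularity (the proof of Theorem~\ref{theo:cut-project-MEGF} does not use the corollary), but it inverts the intended dependence: the corollary is meant to be a cheap consequence of \cite{KR2015,KR2016}, while Theorem~\ref{theo:cut-project-MEGF} is the new result. Second, your ``principal obstacle'' in (1b) is a real gap as written: you assert that $\{\hx:\nuW(\hx)(K)=0\}$ has positive $m_\hX$-measure ``because $W$ has empty interior'', but this is not what \cite{KR2015} proves---there one shows $\0\in\MW$, not $\0\in\supp(\QM)$. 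The claim is true, but it needs an argument: for compact $K$, the set $\pihX\bigl(K\cap(G\times W)\bigr)$ is a compact subset of $\hX$ which, by a Baire-category argument using that $\pihX$ is a local homeomorphism and $K\cap(G\times W)$ is nowhere dense, is itself nowhere dense in $\hX$ and hence has $m_\hX$-measure strictly less than $1$. The paper sidesteps this entirely via the unique-minimal-subsystem observation.
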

\begin{proof}
For (2) this is trivial, because $\QM(X)=\QMG(\XG)=1$. For (1) observe first, that $(\MW,G)$ has a unique minimal subsystem \cite[Lemma 6.3]{KR2015}, so this system is contained in $X$. In case a)
it  is an almost automorphic extension of $(\hX,G)$ \cite[Theorem 1a]{KR2015}, so that $(\hX,G)$ is its MEF. But then $(\hX,G)$ is also the MEF of $(X,G)$. In case b), the minimal system is a fixed point, so that the MEF of any subsystem containing this fixed point is trivial.
\end{proof}

The facts listed as (1) above, and also Corollary~\ref{coro:KR2015}, provide no 
useful information on the MEF of the systems $(\MWG,G)$ and $(\XG,G)$, respectively. This changes completely when the
MEGF of $(\XG,G)$ is considered. (Recall that $\XG$ denotes the support of the Mirsky measure on $\MWG$.)

We denote by $W_{reg}$ the topological support of $m_H|_W$. This is the smallest closed subset of $W$ which has full Haar measure inside $W$.
We say that the window $W$ is \emph{Haar regular}, if $W_{reg}=W$ \cite[Def.~3.10]{KR2018}.
A Haar regular window is Haar aperiodic if and only it is \emph{aperiodic}, i.e. if $W+h=W$ implies $h=0$ \cite[Sec.~3.2]{KR2018}.

\begin{theorem}\label{theo:cut-project-MEGF}
Suppose that $W$ is Haar regular.
\begin{compactenum}[a)]
\item $(\hX,G)$ is the MEGF of $(X,G)$.
\item If the window $W$ is aperiodic, then the same is true for the system $(\XG,G)$.
\end{compactenum}
\end{theorem}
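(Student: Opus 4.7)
The plan is to construct, in each case, a continuous $G$-equivariant map $\beta$ (resp.\ $\beta^G$) from the transitive points to $\hX$, exhibiting $(\hX,G)$ as an equicontinuous generic factor, and then to upgrade this to maximality via the measure-theoretic isomorphism of Fact (2) and the structure of Kronecker factors of compact abelian group rotations.

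For part a), define $\beta:X_t\to\hX$ by $\beta(\nu):=\pihX(y)$ for any $y\in\supp\nu$. Well-definedness requires that $\supp\nu$ sit inside a single $\LL$-coset of $G\times H$: I would approximate $\nu\in\MW$ vaguely by $\nuW(\hx_n)$, extract a subsequence with $\hx_n\to\hx$ (compactness of $\hX$), lift locally to $x_n\to x\in G\times H$, and observe that any $y\in\supp\nu$ is the limit of points $y_n\in x_n+\LL$, so $y\in x+\LL=\pihX^{-1}(\hx)$ by closedness of $\LL$. Continuity on $\{\nu:\supp\nu\neq\emptyset\}$ (which contains $X_t$ unless $X$ is trivial) follows because under vague convergence any neighbourhood of a point of $\supp\nu$ eventually meets $\supp\nu_n$, and $\pihX$ is continuous. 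Equivariance is immediate, and $\beta\circ\nuW=\id_\hX$ on the $m_\hX$-full set where $\nuW(\hx)\neq 0$. This exhibits $(\hX,G)$ as an equicontinuous generic factor of $(X,G)$.

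For maximality, let $(Z,G)$ be the MEGF of $(X,G)$ produced by Theorem~\ref{theo:max-factor}, with map $\pi:X_t\to Z$. Theorem~\ref{theo:max-factor}(c) applied to $\beta$ yields a continuous factor map $q:Z\to\hX$ with $\beta=q\circ\pi$. On the other hand, Fact (2a) makes $(Z,\pi_*\QM,G)$ a measure-theoretic Kronecker factor of $(\hX,m_\hX,G)$, so $Z\cong\hX/K$ topologically for some closed subgroup $K\leq\hX$. The transferred map $\phi:=\pi\circ\nuW:\hX\to\hX/K$ is measurable and $G$-equivariant, and a short ergodicity argument (the $G$-invariant function $\hx\mapsto\phi(\hx)-p(\hx)$ is a.s.\ constant, where $p:\hX\to\hX/K$ is the canonical quotient) shows $\phi=p+c$ almost surely for some $c\in\hX/K$. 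The continuous map $\psi:\hX/K\to\hX$, $\psi(z):=q(z+c)$, therefore satisfies $\psi\circ p=q\circ\phi=\beta\circ\nuW=\id_\hX$ on an $m_\hX$-full set, hence everywhere by continuity, so $p$ is injective, $K=\{0\}$, and $Z\cong\hX$, proving a).

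For part b), the same strategy applies with Fact (2b) (which requires Haar aperiodicity) replacing Fact (2a); the maximality argument transfers verbatim once $(\hX,G)$ is exhibited as an equicontinuous generic factor of $(\XG,G)$. The obstacle is precisely this last step: the projection $\piG_*:X\to\XG$ discards the $H$-coordinate, so $\hx$ can no longer be read off the support of a configuration $\nu^G\in\XG$. I would use Haar aperiodicity to show that on a $G$-invariant $\QMG$-conull subset $\Xnull^G\subseteq\XG_t$ the preimage of $\nu^G$ under $\piG_*$ meets $X_t$ in a single point $\nu$, set $\beta^G(\nu^G):=\beta(\nu)$ there, and prove continuity of $\beta^G$ on $\Xnull^G$ by a compactness and selection argument for $\nuWG$ parallel to, but more delicate than, the well-definedness proof for $\beta$ in a). This continuity step is where I expect the real work: it is the place where Haar aperiodicity enters the topological picture and distinguishes b) from the more straightforward a).
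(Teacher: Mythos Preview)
Your construction of $\beta$ in part~a) is essentially what the paper does, only phrased differently: the paper packages your ``$\supp\nu$ lies in a single $\LL$-coset'' observation into the graph system $\cG\MW\subseteq\hX\times\cM$ and the fact (from \cite[Prop.~3.5b]{KR2015}) that the projection $\cG\MW\to\MW$ is one-to-one over $\MW\setminus\{\0\}$; your map is then $\pihX_*\circ(\piGH_*)^{-1}$. Where you genuinely diverge is in the maximality step. Having obtained the factor map $q:Z\to\hX$, the paper argues directly that $q$ is one-to-one: from $q\circ(\pi\circ\nuW)=\id_{\hX}$ a.e.\ it follows that $q$ is an a.e.\ bijection, and then an elementary upper-semicontinuity lemma (Lemma~\ref{lemma:technical-1}: the fibre diameter of a continuous map between compact spaces is upper semicontinuous, hence constant along minimal orbits) forces $q$ to be injective everywhere. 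Your route instead invokes Halmos--von~Neumann to identify $Z\cong\hX/K$, then uses ergodicity to pin down $\pi\circ\nuW$ as an affine map $p+c$, and finally reads off $K=\{0\}$ from the continuous identity $q(\,\cdot\,+c)\circ p=\id_{\hX}$. Both are valid; the paper's is more self-contained, while yours leans on the spectral classification of Kronecker systems.

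For part~b) you correctly isolate the crux (a.e.\ injectivity of $\piG_*$ under Haar aperiodicity, which is \cite[Thm.~1]{KR2016}). The paper's execution is again more economical than what you sketch. First, the continuity of your $\beta^G$ comes for free from the same upper-semicontinuity lemma: the set $\Xnull$ where $\piG_*$ has singleton fibres is automatically $G$-invariant and Borel, and $\piG_*|_{\Xnull}$ is a homeomorphism onto its image---no separate ``compactness and selection argument'' is needed. Second, rather than rerunning the Kronecker-factor argument, the paper sandwiches $\ZG$ between two copies of $\hX$: since $\piG_*|_{\Xnull'}$ is a homeomorphism, $\piZG\circ\piG_*:\Xnull'\to\ZG$ is an equicontinuous factor of $(X,G)$, so Theorem~\ref{theo:max-factor}c (using part~a)) gives $\pi':\hX\to\ZG$; conversely $\beta\circ(\piG_*)^{-1}:\piG_*(\Xnull')\to\hX$ is an equicontinuous factor of $(\XG,G)$, giving $\pi'':\ZG\to\hX$; and $\pi''\circ\pi'=\id_{\hX}$ a.e., hence everywhere. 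This sandwich avoids any second appeal to the Kronecker structure and makes the dependence on part~a) explicit. The paper also notes the reduction to the Haar-regular case (the support $X^G$ of the Mirsky measure is unchanged if $W$ is replaced by its Haar regularisation), which you do not mention but would need.
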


\begin{remark}
If the window is not aperiodic, the MEGF of $(\XG,G)$ is the $G$-action on factor group of~$\hX$: the periods of $W$ must be factored out as in the corresponding theorems from \cite{KR2018}.
\end{remark}

We precede the proof of the theorem with some observations and notations from \cite{KR2015,KR2018}:
The system $(\MWG,G)$ can be extended to the \emph{graph system} $(\GMWG,G)$, where $\GMWG\subseteq\hX\times\MG$ is the closure of the graph of $\nuWG:\hX\to\MG$. This system is a topological joining of $(\hX,G)$ and $(\MWG,G)$.
The natural projections $\pihX_*:\GMWG\to\hX$ and $\tpiG_*:\GMWG\to\MWG$ are continuous. 
Finally, there is a homeomorphism $\Phi:\GMWG\setminus(\hX\times\{\0\})\to\MW\setminus\{\0\}$ that commutes with the respective actions of $G$. The natural projection $\piG_*:\MW\to\MWG$ satisfies $\tpiG_*=\piG_*\circ\Phi$.
\footnote{See \cite[Lem.~5.3]{KR2015} and the discussion around \cite[Lem.~2.1]{KR2015}.
In the notation from that paper, $\Phi=\piGH_*\circ(\pihXG_*)^{-1}$, and $\tpiG_*$ is also denoted by $\piG_*$.} The composition $\hat\pi:=\pihX_*\circ\Phi^{-1}:\MW\setminus\{\0\}\to\hX$ is the (continuous) map defined in \cite[Def.~5.5]{KR2015}. In view of \cite[Lemma~4.4]{KR2015},
it satisfies 
\begin{equation}\label{eq:domination}
\nu\leqslant\nuW(\hat\pi\nu)\quad\text{for all}\quad\nu\in\MW\setminus\{\0\},
\end{equation}

Next we recall one more concept from \cite[Sec.~3.2 and 4]{KR2018}: 
Denote $\widetilde{\MW}:=\{\nu\in\cM: \nu\leqslant\nuW(\hx)\text{ for some }\hx\in\hX\}$.
Then $\MW=\overline{\nuW(\hX)}\subseteq\widetilde{\MW} $, because $\nuW$ is upper semi-continuous, and the projection $\piG_*$ extends naturally from $\MW$ to $\widetilde{\MW}$.
For each $\nu\in\widetilde{\MW}$, $\piH_*\nu$ is a measure
%
on $H$, and we denote the topological support of this measure by $\SH(\nu)$. We thus have
\begin{equation}\label{eq:SH(nu)}
\SH(\nu)=\supp(\piH_*(\nu))=\overline{\piH(\supp(\nu))}\subseteq W\ .
\end{equation}
It is advantageous to view $\SH$ as a map from $\widetilde{\MW}\setminus\{\0\}$ to $\KW$, the space of all non-empty compact subsets of~$W$, which is equipped with the topology generated by the Hausdorff distance.

Denote by $\XG_t$ the set of topologically transitive points of $\XG$. 
\begin{lemma}\label{lemma_X_t-0}
If $\piG_*\nu\in \XG_t$ for some $\nu\in\widetilde{\MW}$, then $W_{reg}\subseteq\SH(\nu)$.
\end{lemma}
\begin{proof}
Suppose first that $m_H(W')=m_H(W)$. Then $m_H(W'\cap W_{reg})=m_H(W)$
so that $W_{reg}\subseteq W_{reg}\cap W'\subseteq\SH(\nu)$.

Suppose now that
$m_H(W)-m_H(W')>0$. We will derive a contradiction:
Let $\nu\in\widetilde{\MW}\setminus\{\0\}$ and recall from \eqref{eq:domination} that $\nu\leqslant\nuW(\hat\pi\nu)$.
Then
\begin{equation*}
\nu\{x\}=1
\quad\Rightarrow\quad 
\nuW(\hat\pi\nu)\{x\}=1\text{ and } x_H\in W'
\quad\Rightarrow\quad
\nuWprime(\hat\pi\nu)\{x\}=1
\quad\text{for each }x\in G\times H
\end{equation*}
just by the definition of $W'$, and so $\nu\leqslant\nuWprime(\hat\pi\nu)$. 
Hence, given any tempered van Hove sequence $(A_n)_{n\in\N}$ of subsets of $G$,
the upper density of any $S_g\nu$ w.r.t. $(A_n)_n$ is bounded 
in the following way\cite[Thm.~3]{KR2015} \footnote{This is based on a previous result by Moody \cite{Moody2002}.}: $\forall\epsilon>0\ \exists n_0\in\N\ \forall n\geqslant n_0\ \forall g\in G :$
\begin{equation*} 
\frac{(S_g\nu)(A_n\times H)}{m_G(A_n)}
\leqslant
\frac{(S_g\nuWprime(\hat\pi\nu))(A_n\times H)}{m_G(A_n)}
=
\frac{\nuWprime(T_g(\hat\pi\nu))(A_n\times H)}{m_G(A_n)}
\leqslant
\dL\cdot m_H(W')+\epsilon.
\end{equation*}
This applies in particular to
$\epsilon:=\frac{\dL}{2}\left(m_H(W)-m_H(W')\right)>0$ and yields
\begin{equation*}
\frac{(S_g(\piG_*\nu))(A_n)}{m_G(A_n)}
=
\frac{(\piG_*(S_g\nu))(A_n)}{m_G(A_n)}
=
\frac{(S_g\nu)(A_n\times H)}{m_G(A_n)}
\leqslant 
\dL\cdot m_H(W')+\epsilon
=
\dL\cdot m_H(W)-\epsilon
\end{equation*}
for all $g\in G$. Hence all points in the orbit closure of $\piG_*\nu$ have Banach density at most $\dL\cdot m_H(W)-\epsilon$. As $\XG=\supp(\QMG)$ and as $\QMG$-a.a.~$\nuG\in\XG$ have density equal to $\dL\cdot m_H(W)$, this contradicts the assumption that $\piG_*\nu$ is a topologically transitive point in $\XG$.
\end{proof}

\begin{lemma}\label{lemma:X_t}
Suppose that $W$ is Haar regular and and aperiodic.
\begin{compactenum}[a)]
\item If $\piG_*\nu\in \XG_t$ for some $\nu\in\widetilde{\MW}$, then $(\piG_*)^{-1}\{\piG_*\nu\}=\{\nu\}$.
\item $(\piG_*)^{-1}|_{\XG_t}:\XG_t\to\MW\setminus\{\0\}$ and
$\hpiG:\XG_t\to \hX$, $\hpiG:=\hat\pi\circ(\piG_*)^{-1}|_{\XG_t}$ are well defined maps. Both are continuous (when $\XG_t\subseteq \XG$ is equipped with the subspace topology).
\item 
If $\nuG\in \XG_t$, then $\nuG\leqslant\nuWG(\hpiG\nuG)$.
\item 
If $\nuG\in \XG_t$ and $\nuG\leqslant\nuWG(\hx)$ for some $\hx\in\hX$, then $\hpiG\nuG=\hx$.
\end{compactenum}
\end{lemma}
\begin{proof}
a) This follows from \cite[Lemma~4.6]{KR2018}, because Haar regularity of $W$ and Lemma~\ref{lemma_X_t-0} imply that $\SH(\nu)=W$.
\\[2mm]
b) As each $\nuG\in \XG_t$ is the image of some $\nu\in\MW\setminus\{\0\}$ under $\piG_*$, the well definedness follows from part~a).
For the continuity of $(\piG_*)^{-1}|_{\XG_t}$ it suffices to observe that the preimage of any closed set $A\subseteq \MW$ under this map is closed in $\XG_t$:
\begin{displaymath}
\left\{\nuG\in\XG_t:(\piG_*)^{-1}(\nuG)\in A\right\}
=
\XG_t\cap\piG_*(A),
\end{displaymath}
and as $A$ is a closed subset of the compact metric space $\MW$ and $\piG_*:\MW\to\MWG$ is continuous, the set $\piG_*(A)$ is closed.
\\[2mm]
c) Let $\nuG\in\XG_t$ and denote $\nu:=(\piG_*)^{-1}\nuG$. As $\nu\in\MW\setminus\{\0\}$, we have $\nu\leqslant\nuW(\hat\pi\nu)$. Hence
$
\nuG=\piG_*\nu\leqslant\piG_*(\nuW(\hat\pi\nu))=\nuWG(\hpiG\nuG)
$.
\\[2mm]
d) Let $\hx\in\hX$ and $\nuG\in\XG_t$ with $\nuG\leqslant\nuWG(\hx)=\piG_*(\nuW(\hx))$,
and denote $\nu:=(\piG_*)^{-1}\nuG$. 
Define $\tilde{\nu}\in\MW\setminus\{\0\}$ by $\tilde{\nu}\{(g,h)\}=1$, if $\nuW(\hx)\{(g,h)\}=1$ and $\nuG\{g\}=1$, otherwise $\tilde{\nu}\{(g,h)\}=0$.
Then
$\piG_*\tilde{\nu}=\nuG$ and $\tilde{\nu}\leqslant\nuW(\hx)$. As $\piG_*\nu=\nuG=\piG_*\tilde{\nu}$, 
Lemma~4.4 from \cite{KR2018} guarantees the existence of some $d\in H$ such that 
$\tilde{\nu}(A)=\nu(A-(0,d))$ for each Borel subset $A$ of $G\times H$, and consequently $\SH(\nu)=\SH(\tilde{\nu})+d$. But $\SH(\nu)=\SH(\tilde{\nu})=W$ as shown in the proof of part a), so that $W=W+d$, and the aperiodicity of $W$ implies that $d=0$.
Hence $\tilde{\nu}=\nu$, so that $\nu\leqslant\nuW(\hx)$. On the other hand, $\hat\pi\nu$ is the unique point in $\hX$ for which $\nu\leqslant\nuW(\hat\pi\nu)$ \cite[Lem.~5.4]{KR2015}, so that $\hx=\hat\pi\nu$, where
$\hat\pi\nu=\hat\pi((\piG_*)^{-1}\nuG)=\hpiG\nuG$.
\end{proof}

\begin{proof}[Proof of Theorem~\ref{theo:cut-project-MEGF}]
$(X,G)$ and $(\XG,G)$ are ergodic systems in the sense of Definition~\ref{def:ergodic}, where the respective Mirsky measures play the role of the measure $\lambda$. Hence Theorem~\ref{theo:max-factor} applies, and both systems have compact abelian groups as MEGFs, call them  $(Z,G)$ and $(\ZG,G)$ with factor maps $\piZ$ and $\piZG$, respectively.\\
a)\ 
$\hat\pi=\pihX_*\circ\Phi^{-1}:(X_t,G)\to(\hX,G)$ is a generic factor map. 
 By Theorem~\ref{theo:max-factor}\ref{item:c}, there is a factor map $\pi':Z\to\hX$ such that 
$\hat\pi=\pi'\circ\piZ$ on $X_t$. 
Let $\hx_0=\pi'(0)$. The  $\pi'=\alpha+\hx_0$ for some group homomorphism $\alpha:Z\to\hX$.\footnote{As $G$ acts on $Z$ and on $\hX$ by translation, there are group monomorphisms $\eta:G\to Z$ and $\hat\eta:G\to\hX$ such that $gz=z+\eta(g)$ and $g\hx=\hx+\hat\eta(g)$ for all $z\in Z, \hx\in\hX$ and $g\in G$. 
Consider $\alpha:=\pi'-\hx_0$. 
For $g,g'\in G$ we have
\begin{equation*}
\begin{split}
\alpha(\eta(g)+\eta(g'))
&=
\pi'(\eta(g+g'))-\hx_0
=
\pi'(0)+\hat{\eta}(g+g')-\hx_0
=
\hat{\eta}(g)+\hat{\eta}(g').
\end{split}
\end{equation*}
In other words: $\alpha|_{\eta(G)}:\eta(G)\to\hat{\eta}(G)$ is a group homomorphism, and as $\alpha:Z\to\hX$ is continuous and $\eta(G)$ is dense in $Z$, this shows that
$\alpha:Z\to\hX$ is a group homomorphism.} This shows that, for each $\hx\in\hX$, the set $(\pi')^{-1}\{\hx\}$ is a coset of $\ker(\alpha)$, and in order to prove that $\pi':(Z,G)\to(\hX,G)$ is a homeomorphism it suffices to observe that $\card((\pi')^{-1}\{\hx\})=1$ for each continuity point $\hx$ of $\nuW$: for such a point, 
$\{\nuW(\hx)\}=\hat\pi^{-1}\{\hx\}=(\piZ)^{-1}((\pi')^{-1}\{\hx\})$.
Hence $\hat\pi:(X_t,G)\to(\hX,G)$ is a maximal generic factor. 
\\[1mm]
b)\ 
$\hpiG=\hat\pi\circ(\piG_*)^{-1}:\XG_t\to\hX$ is a continuous map by Lemma~\ref{lemma:X_t}b. As $\hat\pi$ and $\piG_*$ commute with the respective actions of $G$, the map $\hpiG:(\XG_t,G)\to(\hX,G)$ is indeed an equicontinuous generic factor.


Next, $\piZG\circ\piG_*:(X_t,G)\to(\ZG,G)$
is an equicontinuous generic factor.
As $\hat\pi:(X_t,G)\to(\hX,G)$ is the MEGF of $(X,G)$ by part a) of the present theorem,
Theorem~\ref{theo:max-factor}\ref{item:c} guarantees the existence of a factor map $\pi':(\hX,G)\to(\ZG,G)$ such that $\piZG\circ\piG_*=\pi'\circ\hat\pi$
on $X_t$. On the other hand, Theorem~\ref{theo:max-factor}\ref{item:c} applies as well to the equicontinuous generic factor
$\hat\pi\circ(\piG_*)^{-1}:(\XG_t,G)\to(\hX,G)$, i.e. there is a factor map $\pi'':(\ZG,G)\to(\hX,G)$ such that $\hat\pi\circ(\piG_*)^{-1}=\pi''\circ\piZG$ on
$\XG_t$. Hence $\hat\pi=\pi''\circ\piZG\circ\piG_*=\pi''\circ\pi'\circ\hat\pi$ on $X_t$, so that
$\pi''\circ\pi'=\id_{\hX}$ on the dense subset $\hat\pi(X_t)$ of $\hX$.
As $\pi''\circ\pi'$ is continuous, this shows that $\pi''\circ\pi'=\id_{\hX}$,
in particular $\pi'$ is \oneone.
Being a factor map from $\ZG$ onto $\hX$, $\pi'$ thus is a homeomorphism,
so that $(\hX,G)$ is (isomorphic to) the MEGF $(\ZG,G)$ of $(\XG,G)$.
\end{proof}

\section{Applications to $\cB$-free dynamics}\label{sec:B-free}

$\cB$-free dynamical systems form a (very) special sub class of dynamical systems generated by weak model sets. We discuss the MEGF of these systems 
and use it to prove that the centralizer of $\cB$-free systems of Erd\"os type is trivial.

\subsection{A recollection of facts on $\cB$-free systems}

For any given set $\cB\subseteq\N$ one can define its \emph{set of multiples} $\cM_\cB=\bigcup_{b\in\cB}b\Z$ and the set of \emph{$\cB$-free numbers} $\cF_\cB=\Z\setminus\cM_\cB$. Investigations into the structure of $\cM_\cB$ or, equivalently, of $\cF_\cB$ have a long history (see \cite{BKKL2015} for references), and dynamical systems theory provides some useful tools for such studies. One way to see this is to interpret such systems as  weak model sets, where $G=\Z$,
$H$ is a closed subgroup of $\tilde H:=\prod_{b\in\cB}\Z/b\Z$, namely the closure of the canonically embedded integers: $H=\overline{\Delta(\Z)}$ where $\Delta(n)=(n,n,n,\dots)\in\tilde{H}$. 
Finally, $\cL=\{(n,\Delta(n)):N\in\Z\}$ and $W=\{h\in H: h_b\neq0\ \forall b\in\cB\}$. It is easy to see that in this case $\hX$ is isomorphic to $H$, 
so that instead of $\nuWG:\hX\to\cMG$ one simply looks at $\nuWG:H\to\cMG$, where
$\nuWG(h)=\sum_{n\in\Z}\delta_{n}\cdot1_W(h+\Delta(n))$ $(h\in H)$. Then $\nuWG(0)=\sum_{n\in\Z}\delta_{n}\cdot1_W(\Delta(n))=\sum_{n\in\cF_\cB}\delta_n$.

In the literature on $\cB$-free dynamics (e.g.~\cite{Kulaga-Przymus2014,BKKL2015,KKL2017}) these point measures on $\Z$ are represented as 0-1-sequences. This means that the map $\nuWG$ is replaced by a map
$\varphi:H\to\{0,1\}^\Z$ defined by $(\varphi(h))_n=1_W(T^nh)=1_W(h+\Delta(n))$, where $\Delta:\Z\to H$ is the natural embedding of $\Z$ into $H$ and, correspondingly, the set $\MWG$ is replaced by the set
$X_\varphi:=\overline{\varphi(H)}$. The systems $(\MWG,\Z)$ and $(X_\varphi,\Z)$, with the respective left shifts as $\Z$-actions are obviously isomorphic dynamical systems. The Mirsky measure $\QMG$ is denoted by $\nu_\eta$, because it is (quasi-)generic for the point $\eta:=\varphi(\Delta(0))\in X_\varphi$.

If the set $\cB\subseteq\Z$ is \emph{taut} (a basic regularity property whose definition is recalled in the next subsection), then the window is always \emph{Haar aperiodic} \cite{KKL2017}.
Hence $(H,\Z)$ is the MEGF of $(X_\varphi,\Z)$ by Theorem~\ref{theo:cut-project-MEGF}, where $\Z$ acts on $H$ by $h\mapsto h+\Delta(n)$,
and $(H,m_H,\Z)$ is also the Kronecker factor of $(X_\varphi,\nu_\eta,\Z)$, and there are many other invariant measures with the same KF. That this need not be the case for all ergodic invariant 
probability measures on $(X_\varphi,\Z)$ is demonstrated in the following example. 

\begin{example}\label{example:B-free-KF-MEGF}
Consider an \emph{Erd\"os set} $\cB$ as studied in \cite{Kulaga-Przymus2014}, where the elements of $\cB$ are pairwise co-prime and $\sum_{b\in\cB}1/b<\infty$.
Even for this rather special class of systems (which includes the square-free numbers)
one can find ergodic invariant measures with full topological support, for which the KF is not supported by the MEGF. The following construction of such examples uses a result from \cite{Kulaga-Przymus2014} whose proof relies on \cite[Theorem 2]{Furstenberg1995}.

 Let $\kappa$ be an ergodic shift-invariant probability measure on $\{0,1\}^\Z$, and denote by 
$\nu_\varphi*\kappa:=M_*(\nu_\varphi\otimes\kappa)$ the ``convolution'' of $\nu_\varphi$ and $\kappa$, where $M:(\{0,1\}^\Z)^2\to\{0,1\}^\Z$ is the coordinate-wise multiplication \cite[Section 2]{Kulaga-Przymus2014}. Corollary~3.15 together with Remark~3.16 of that reference shows that the system
$(X_\varphi,\nu_\eta*\kappa,\Z)$ is isomorphic to the direct product of the systems system $(X_\varphi,\nu_\eta,\Z)$ and $(\{0,1\}^\Z,\kappa,\Z)$, whenever the latter system is (isomorphic to) an irrational rotation. In order to make sure that $\nu_\varphi*\kappa$ has the same topological support as $\nu_\varphi$ itself, it suffices to produce a 0-1-coding of an irrational rotation for which each block of 1's (of arbitrary length) has positive probability.

To this end fix any irrational number $\alpha\in(0,1]$ and a sequence $(J_n)_{n>0}$ of open subintervals of $\R/\Z$ with length $|J_n|=(2(2n+1)2^n)^{-1}$. Define
\begin{equation*}
E:=\bigcup_{n=1}^\infty\bigcup_{k=-n}^{n}(J_n+k\alpha)\ .
\end{equation*}
Denote the Lebesgue measure on $\R/\Z$ by $\lambda$.
Then $0<\lambda(E)\leqslant1/2$. The coding map $\varphi_E:\R/\Z\to\{0,1\}^\Z$, $x\mapsto(1_E(x+k\alpha))_{k\in\Z}$, is $\lambda$-almost surely \oneone\ \footnote{Since I could not locate this statement in the literature, I provide a sketch of a proof: It suffices to prove that $\bigcup_{k\in\Z}(R_\alpha\times R_\alpha)^k(E\times E^c)\subseteq(\R/\Z)^2$ has full 1-dimensional Lebesgue measure on the line $L_\delta:=\{(x,x+\delta): x\in\R/\Z\}$ for every irrational $\delta$. By ergodicity of $R_\alpha\times R_\alpha$ along $L_\delta$, it suffices to show that the measure is not zero. Therefore suppose for a contradiction that $(E\times E^c)\cap L_\delta$ has Lebesgue measure $0$. Then $E\subseteq E+\delta$ up to measure $0$, which contradicts $0<\lambda(E)<1$ in view of the irrationality of $\delta$.}, and the probability, that this coding produces only 1's at positions $-n,\dots,n$ is at least $|J_n|>0$.

Formally this construction can be written as a kind of model set, where the internal space $H$ is replaced by $H\times(\R/\Z)$ and where the set $W\times E$ is taken as a window. Observe however, that $E$ is an open dense subset of $\R/\Z$, so that the closure of this window would be $W\times(\R/\Z)$, which by itself is a window that reproduces precisely the original system. Hence this construction is far from any weak model set.
\end{example}

\begin{remark}
Example~2 in \cite[Section 2.2.2]{Kulaga-Przymus2014} shows that many $\cB$-free systems support invariant ergodic measures $P$ with the following two properties:
\begin{compactenum}[i)]
\item The KF of $P$ is bigger than the KF of the Mirsky measure and is hence not supported by the MEGF of the system. 
\item $P$ is obtained as the Mirsky measure of a compact sub-window of the original window.
\end{compactenum}
These measures do not have full topological support, however. 
\end{remark}

\subsection{Generalized Erd\"os-type $\cB$-free systems have trivial centralizers}
\label{subsec:Centralizers}
Recall that $X_\varphi=\overline{\varphi(H)}$,
$\eta=\varphi(\Delta(0))$, and denote $X_\eta:=\overline{\varphi(\Delta(\Z))}=\overline{\{S^n\eta:n\in\Z\}}$, where $S:X_\varphi\to X_\varphi$ denotes the left shift. In the sequel we denote the natural $\Z$ action on $H$ by $T:H\to H$, $h\mapsto h+\Delta(1)$. We noted already that
\begin{enumerate}[(P1)]
\item\label{item:P1} 
$S\circ\varphi=\varphi\circ T$, but in general $\varphi:H\to X_\varphi$ is not continuous.
\end{enumerate}

For the rest of this section we assume that the set \emph{$\cB$ is taut} (see e.g. \cite{Hall1996,BKKL2015}). This means that for each $b\in\cB$ the logarithmic asymptotic density $\ddelta(\cM_{\cB\setminus\{b\}})$ is strictly smaller than $\ddelta(\cM_\cB)$, where $\ddelta(\cM_\cB):=\lim_{n\to\infty}\frac{1}{\log n}\sum_{k\leqslant n,k\in\cM_\cB}k^{-1}$
denotes the logarithmic density of this set, which is known to exist by the Theorem of Davenport and Erd\"os \cite{DE1936,DE1951}.

\paragraph{General assumption:}
$\cB$ is taut and contains an infinite pairwise co-prime subset. \\[2mm]
Under this assumption $(X_\eta,S)$ is proximal \cite{BKKL2015} and
\begin{enumerate}[({A}1)]
\item\label{item:A1}  $X_\eta=X_\varphi=\supp(\nu_\eta)$, where $\nu_\eta=m_H\circ\varphi^{-1}$ denotes the Mirsky measure. We call this set simply $X$ and denote by $X_t$ the topologically transitive points in $X$. As $m_H$, and hence also $\nu_\eta$, is ergodic, we have $m_H(\varphi^{-1}(X_t))=1$.
\item\label{item:A2}  $X$ is hereditary, i.e. if $x\in X$ and $y\in\{0,1\}^\Z$ are such that $y\leqslant x$, then also $y\in X$.
\item\label{item:A3}  The window $W$ is Haar regular and aperiodic.
\end{enumerate}
Properties (\rm A\ref{item:A1}) and (\rm A\ref{item:A2}) are finally proved in \cite{Keller-tautness}, but the proof relies significantly on previous work in \cite{BKKL2015} and to some extent also on \cite[Prop.~2.2]{KKL2017}. Property (\rm A\ref{item:A3}) is the combination of \cite[Thm.~A and Prop.~5.1]{KKL2017}.

As $(X,S)$ is proximal with unique fixed point $0^\Z$ \cite{BKKL2015}, the MEF of this system is trivial and useless for the centralizer problem.
Here we will use the MEGF of $(X,S)$ instead
to give a purely dynamical alternative proof of a result of Mentzen~\cite{Mentzen2017} (who assumes that $\cB$ is of Erd\"os type).\footnote{There are also some unpublished  notes by Lema\'n{}czyk et al. providing a related proof but also using explicitly arithmetic properties.}  See also Remark~\ref{remark:mentzen} below.  
\begin{theorem}[Trivial centralizer, see also~\cite{Mentzen2017}]\label{theo:trivial-centralizer}
Suppose that $\cB$ is taut and contains an infinite pairwise co-prime subset.
If $F:X\to X$ is  a homeomorphism that commutes with $S$, then $F\circ S^k=\id_X$ for some $k\in\Z$.
\end{theorem}

\subsubsection{The role of the MEGF}
Some arguments below are based on our Lemma~\ref{lemma:X_t}. For the convenience of the reader we rewrite the two relevant statements of that lemma using the special notation for $\cB$-free systems:
\begin{lemma}\label{lemma:X_t-B}
Assume that $\cB$ is taut and denote by $X_t$ the set of topologically transitive points of $X$. There is a well defined map $\pi:X_t\to H$ with the following properties:
\begin{compactenum}[a)]
\item $\pi:X_t\to H$ is continuous.
\item If $x\in X_t$, then $x\leqslant\varphi(\pi x)$.
\item If $x\in X_t$ and $x\leqslant\varphi(h)$, then $h=\pi x$.
\end{compactenum}
\end{lemma}

\begin{corollary}\label{coro:B-free-MEGF}
$\pi:(X_t,S)\to(H,T)$ is the MEGF of $(X,S)$. In particular, 
\begin{enumerate}[(P1)]
\setcounter{enumi}{1}
\item\label{item:P2}  $S(X_t)=X_t$ and $\pi\circ S_{|X_t}=T\circ\pi$.
\end{enumerate}
\end{corollary}
\begin{proof}
If $x\in X_t$, then also $Sx\in X_t$, and as $x\leqslant\varphi(\pi x)$ by {Lemma~\ref{lemma:X_t-B}b}, also $Sx\leqslant S(\varphi(\pi x))\overset{(\rm P\ref{item:P1})}=\varphi(T(\pi x))$, so that $\pi(Sx)=T(\pi x)$ by {Lemma~\ref{lemma:X_t-B}c}. The continuity of $\pi:X_t\to H$ is proved in {Lemma~\ref{lemma:X_t-B}a}. Hence $\pi$ is an equicontinuous generic factor map. In view of (\rm A\ref{item:A3}) and Theorem~\ref{theo:cut-project-MEGF}b it is the MEGF.
\end{proof}

Let 
$F:X\to X$ be a \emph{continuous and surjective map that commutes with $S$}. Then
\begin{enumerate}[(P1)]
\setcounter{enumi}{2}
\item\label{item:P3}  $F(X_t)= X_t$, 
\end{enumerate}
so that the map $\pi\circ F|_{X_t}:X_t\to H$ is a generic factor map. Hence, by Corollary~\ref{coro:B-free-MEGF} and Theorem~\ref{theo:max-factor}b, there is a (continuous!) factor map $f:(H,T)\to(H,T)$ satisfying
\begin{enumerate}[(P1)]
\setcounter{enumi}{3}
\item\label{item:P4}  $\pi\circ F|_{X_t}=f\circ\pi$.
\end{enumerate}  
Combining (\rm P\ref{item:P2}) and (\rm P\ref{item:P4}) we get
$T(f(\pi x))=T(\pi(Fx))=\pi(S(Fx))=\pi(F(Sx))=f(\pi(Sx))=f(T(\pi x))$ for all $x\in X_t$, and as both $f$ and $T$ are continuous\footnote{Indeed, they are both translations.} and $\pi(X_t)$ is dense in $H$, we have
\begin{enumerate}[(P1)]
\setcounter{enumi}{4}
\item\label{item:P5}   $T\circ f=f\circ T$.
\end{enumerate}

%
\begin{remark}\label{remark:mentzen}
The whole setting described so far applies to general weak model sets, except assumptions (A\ref{item:A1}) -- (A\ref{item:A3}) which, in the context of $\cB$-free dynamics, follow from $\cB$ being taut and containing an infinite pairwise co-prime subset. But, of course, they are formulated in dynamical terms without any recourse to arithmetics. This distinguishes our approach from the one by Mentzen \cite{Mentzen2017}.
\end{remark}

\begin{proposition}\label{prop:main-2}
If $F:X\to X$ is a continuous and surjective map that commutes with $S$, then
there is $k\in\Z$ such that $F(S^kx)\leqslant x$ for all $x\in X$.
\end{proposition}
\begin{proof}
As in the first lines of the proof of Lemma~2 in \cite{Mentzen2017}
it follows that $F(0^\Z)=0^\Z$ and
$F(0^\infty10^\infty)\neq0^\Z$ (one just uses the heredity and proximality of $X$.) 
Denote by $\rho:\{0,1\}^{[-a:a]}$ the block map that defines $F$. Then $\rho(0^{[-a:a]})=0$, because $F(0^\Z)=0^\Z$, and
as $F(0^\infty10^\infty)\neq0^\Z$, there is $k\in\Z$ such that
$F(S^k(0^\infty10^\infty))[0]=S^k(F(0^\infty10^\infty))[0]=F(0^\infty10^\infty)[k]=1$. 

From now on we consider $F\circ S^k$ instead of $F$ and denote this new map again by $F$. The new $F$ also satisfies (\rm P\ref{item:P3}) -- (\rm P\ref{item:P5}), when the map $f$ is replaced by $f\circ S^k$. 
We denote the block map for the new $F$ by $\rho$ again so that, defining $u:=0^a10^a$, we have $\rho(u)=1$.
We will prove that $F(x)\leqslant x$ for all $x\in X$ for the new $F$.

Our first goal is to show that
\begin{equation}\label{eq:goal-1}
\varphi(f(h))[0]\geqslant\varphi(h)[0]\quad\text{for all }h\in H_0:=\varphi^{-1}(X_t).
\end{equation}
Let $h\in H_0$, so that $x:=\varphi(h)\in X_t$.
If $\varphi(h)[0]=0$, there is nothing to prove. So we may assume that $\varphi(h)[0]=1$. 
Define $y\in\{0,1\}^\Z$ by
$y[-a:a]=u$ and $y[n]=x[n]$ for $|n|>a$. Then $y[n]\leqslant x[n]=\varphi(h)[n]$ for all $n\neq0$
and $y[0]=1=\varphi(h)[0]$, so that $y\leqslant\varphi(h)$ and $y\in X$ by heredity (\rm A\ref{item:A2}). 
As $y[n]=x[n]$ for all $n\in\Z$ except at most finitely many and as $x\in X_t$, also $y\in X_t$.
Hence {Lemma~\ref{lemma:X_t-B}c} applies to $y$, so that $\pi y=h$.
We see that $F(y)\in X_t$ by (\rm P\ref{item:P3}), so that
{Lemma~\ref{lemma:X_t-B}b} can be applied to to $F(y)$:
\begin{displaymath}
F(y)\leqslant \varphi(\pi(Fy)).
\end{displaymath}
It follows that
\begin{equation*}
\varphi(f(h))[0]
=
\varphi(f(\pi y))[0]
\overset{(\rm P\ref{item:P4})}=
\varphi(\pi(Fy))[0]\geqslant F(y)[0]=\rho(y[-a:a])=\rho(u)=1=\varphi(h)[0],
\end{equation*}
that is \eqref{eq:goal-1}.
Now, for arbitrary $n\in\Z$, 
$\varphi(T^nh)=S^n(\varphi(h))\in S^n(X_t)=X_t$, so that \eqref{eq:goal-1} applies also to the point $T^nh$ as well, whence
\begin{equation*}
\begin{split}
\varphi(f(h))[n]
&\overset\qquad=
S^n\left(\varphi(f(h))\right)[0]
\overset{(\rm P\ref{item:P1})}=
\varphi\left(T^n(f(h))\right)[0]
\overset{(\rm P\ref{item:P5})}=
\varphi\left(f(T^nh)\right)[0]\\
&\overset{\eqref{eq:goal-1}}\geqslant
\varphi\left(T^nh\right)[0]
\overset{(\rm P\ref{item:P1})}=
S^n(\varphi(h))[0]
=
\varphi(h)[n].
\end{split}
\end{equation*}
Hence
\begin{equation}\label{eq:goal-2}
\varphi(f(h))\geqslant\varphi(h)\quad\text{for all }h\in H_0.
\end{equation}
As $m_H(H_0)=1$ in view of (\rm A\ref{item:A1}), this shows that $\varphi\circ f=\varphi$ $m_H$-a.s. Hence we have for $m_H$-almost all $h\in H$
\begin{equation*}
F(\varphi(h))
\overset{\text{Lemma~\ref{lemma:X_t-B}b}}\leqslant
\varphi(\pi(F(\varphi(h))))
\overset{(\rm P\ref{item:P4})}=
\varphi(f(\pi(\varphi(h))))
\overset{\text{Lemma~\ref{lemma:X_t-B}c}}=
\varphi(f(h))
=\varphi(h),
\end{equation*}
equivalently, $F(x)\leqslant x$ for $\nu_\eta$-a.a. $x\in X$. As $\supp(\nu_\eta)=X$ by (\rm A\ref{item:A1}) and as $F$ is continuous, this shows that $F(x)\leqslant x$ for all $x\in X$.
\end{proof}

\begin{proof}[Proof of Theorem~\ref{theo:trivial-centralizer}]\quad\\
If $F:X\to X$ is even a homeomorphism, then Proposition~\ref{prop:main-2} applies to $F^{-1}$ as well. 
Hence
there are $k,\ell\in\Z$ such that $F(S^kx)\leqslant x$ and $F^{-1}(S^{-\ell} x)\leqslant x$ for all $x\in X$. Thus, for all $x\in X$,
\begin{equation}\label{eq:nearly-finished}
x=(F^{-1}\circ S^{-\ell})\circ (F( S^{\ell} x))
\leqslant 
F( S^{\ell} x)
=
F(S^{k}( S^{\ell-k} x))
\leqslant S^{\ell-k}x.
\end{equation}
Applied to the point $x=0^\infty10^\infty\in X$ this shows that $\ell=k$, and we conclude from \eqref{eq:nearly-finished} that
$x=F(S^kx)$ for all $x\in X$.

\end{proof}

\end{document}